\newtheorem{theorem}{Theorem}[section]
\newtheorem{lemma}[theorem]{Lemma}
\newtheorem*{remark}{Remark}
\newcommand\keywords[1]{\textbf{Keywords}: #1}
\title{Generative Prior-Guided Neural Interface Reconstruction for 3D Electrical Impedance Tomography
}
\author{
Haibo Liu\footnotemark[2] \textsuperscript{,}\footnotemark[1]
\and
Junqing Chen\footnotemark[2]
\and
Guang Lin\footnotemark[3] \textsuperscript{,}\footnotemark[1]
}
\begin{document}
    \maketitle

    \footnotetext[1]{Corresponding authors.}
    
    \footnotetext[2]{Department of Mathematical Sciences, Tsinghua University,
    Beijing 100084, P.R. China. (\url{liuhb19@mails.tsinghua.edu.cn}, \url{jqchen@tsinghua.edu.cn}).}
    
    \footnotetext[3]{Department of Mathematics, School of Mechanical Engineering, Purdue University,
    610 Purdue Mall, West Lafayette, IN 47907, USA. (\url{guanglin@purdue.edu})}

    \begin{abstract}
    Reconstructing complex 3D interfaces from indirect measurements remains a grand challenge in scientific computing, particularly for ill-posed inverse problems like Electrical Impedance Tomography (EIT). Traditional shape optimization struggles with topological changes and regularization tuning, while emerging deep learning approaches often compromise physical fidelity or require prohibitive amounts of paired training data. We present a transformative ``solver-in-the-loop'' framework that bridges this divide by coupling a pre-trained 3D generative prior with a rigorous boundary integral equation (BIE) solver. Unlike Physics-Informed Neural Networks (PINNs) that treat physics as soft constraints, our architecture enforces the governing elliptic PDE as a hard constraint at every optimization step, ensuring strict physical consistency. Simultaneously, we navigate a compact latent manifold of plausible geometries learned by a differentiable neural shape representation, effectively regularizing the ill-posed problem through data-driven priors rather than heuristic smoothing. By propagating adjoint shape derivatives directly through the neural decoder, we achieve fast, stable convergence with dramatically reduced degrees of freedom. Extensive experiments on 3D high-contrast EIT demonstrate that this principled hybrid approach yields superior geometric accuracy and data efficiency which is difficult to achieve using traditional methods, establishing a robust new paradigm for physics-constrained geometric discovery.
    \end{abstract}
    
    \keywords{Shape Optimization, Generative Prior, Electrical Impedance Tomography}
    
    \section{Introduction}
    
    PDE-constrained shape optimization problems represent a fundamental class of inverse problems with broad applications across scientific computing, medical imaging, and engineering design. These problems seek to recover geometric interfaces or boundaries from indirect measurements, where the relationship between geometry and observations is governed by partial differential equations. The mathematical challenge lies in the severe ill-posedness of these inverse problems, compounded by the high-dimensional nature of shape spaces and the computational complexity of repeatedly solving forward PDEs during optimization. Electrical impedance tomography (EIT) exemplifies this challenge, with transformative applications in noninvasive medical diagnosis \cite{Frerichs2000, Gao2015} and non-destructive industrial testing \cite{dickin1996electrical, karhunen2010electrical}. The technique aims to recover conductivity distributions from boundary measurements, with the high-contrast interface reconstruction problem being especially critical—where conductivity exhibits sharp discontinuities across material boundaries characterized by piecewise constant distributions $\sigma(x) = \sigma_1 \chi_{D \setminus S}(x) + \sigma_2 \chi_{S}(x)$ with open subset $S \subset D$ and smooth boundary $\partial S$. This formulation is particularly relevant for applications including metallic implant monitoring in biological tissues, tumor localization with distinct electrical properties, and industrial defect detection, where accurate geometric boundary determination is paramount for diagnostic and safety outcomes.

    A range of approaches have been developed to address the EIT interface reconstruction problem, with regularized Newton methods enhancing stability through Tikhonov regularization \cite{eppler2005regularized} and extended edge-preserving total variation constraints \cite{hintermuller2015robust}, while integral equation approaches \cite{eckel2007nonlinear} offer computational advantages by avoiding full-domain discretization and establishing Fréchet differentiability of the measurement operator. Shape optimization methods \cite{eppler2007shape} directly evolve interface geometry using gradient flows derived from shape calculus, with the level-set method \cite{osher2004level} representing a particularly powerful framework that gained prominence in EIT through pioneering work \cite{soleimani2006narrow, rahmati2012level, liu2017parametric}. Level-set-based shape optimization \cite{chen2009level, rymarczyk2018solving} combines topological flexibility with rigorous mathematical frameworks, treating the interface $\Gamma = \partial S$ as a manifold with shape derivatives derived through adjoint analysis, providing theoretical convergence guarantees through Fréchet differentiable shape derivatives while elegantly handling topological changes. Computational advances include second-order curvature acceleration techniques \cite{afraites2008second}, edge-preserving total variation regularization that significantly outperforms traditional Tikhonov methods in preserving sharp material interfaces, and recent extensions demonstrating versatility through multi-physics formulations \cite{liu2015multi}, nonstationary approaches for resolving depth ambiguities \cite{liu2019nonstationary}, and incorporation of structural priors for complex industrial applications \cite{kolehmainen2019incorporating, alghamdi2024spatial}. However, critical limitations impede their application to comprehensive 3D problems: explicit discretization imposes prohibitive memory requirements largely confining existing EIT studies to 2D or simplified quasi-3D geometries, current heuristic regularization techniques inadequately preserve complex anatomical features while frequently introducing geometric artifacts that compromise diagnostic precision, and these methods exhibit marked sensitivity to initial conditions and topological constraints during interface evolution, creating formidable convergence challenges.
        
    Two dominant deep learning paradigms have emerged for addressing inverse problems, especially electrical impedance tomography (EIT) reconstruction problems. End-to-end learning approaches \cite{hamilton2018deep, molinaro2023neural} directly map boundary voltage measurements to conductivity distributions through deep neural networks, leveraging convolutional architectures, U-Net structures, and autoencoder frameworks to achieve remarkable computational efficiency compared to traditional iterative methods. However, these methods typically require vast quantities of paired training data (ground truth conductivity distributions and corresponding boundary measurements) and exhibit limited interpretability as "black boxes." Physics-Informed Neural Networks (PINNs) \cite{lu2021physics, ZHENG2024112751} represent the alternative paradigm, parameterizing unknown conductivity fields with neural networks while incorporating the governing elliptic PDE as soft constraints through residual minimization in the loss function. While PINN extensions for EIT have shown promising results in handling complex geometries and multi-frequency measurements, training remains sensitive to the complex interplay between different loss terms (measurement misfit versus PDE residual) and hyperparameter tuning, often leading to convergence difficulties or physically inconsistent conductivity reconstructions. A complete introduction can be found in \cite{denker2025deep}.
    
    Recent advances in generative modeling have opened new avenues for solving inverse problems by learning data-driven priors from datasets. Generative approaches learn statistical regularities of feasible solutions and constrain optimization to learned manifolds of realistic configurations. However, existing generative approaches often struggle with incorporating complex physics constraints while maintaining computational efficiency. Our framework represents a principled hybrid architecture that occupies a distinct and highly advantageous middle ground between these two dominant paradigms by leveraging 3D generative models. Rather than learning explicit generative models of conductivity distributions, we employ neural implicit functions as learned geometric priors that inherently encode statistical regularities of anatomical shapes while maintaining differentiability for physics-based optimization. Neural implicit functions, which represent geometry as the level set of a continuous and differentiable mapping learned by a neural network, have shown exceptional capability in capturing intricate 3D shapes \cite{chen2019learning}. Unlike end-to-end methods, our approach is far more data-efficient, leveraging a pre-trained shape model but performing the actual inversion through a physics-based optimization loop, obviating the need for extensive paired measurement-shape data—a critical advantage in medical imaging where such data is scarce or impossible to acquire. In contrast to PINNs, our framework elegantly avoids the convergence pitfalls by maintaining the elliptic PDE governing the electric potential as a hard constraint, rigorously enforced at each optimization step by a dedicated, high-fidelity BIE solver, while the neural network's role is judiciously confined to representing the geometric interface. This modular architecture represents more than a mere design choice; it is a philosophical advantage for scientific machine learning, enabling independent improvement and substitution of each component while ensuring every proposed solution is physically consistent. By parameterizing the conductivity interface as $\Gamma = \{\bm{x} \in \mathbb{R}^3 | f_\theta(\bm{x}) = 0\}$ using a differentiable neural network $f_\theta: \mathbb{R}^3 \to \mathbb{R}$, our framework achieves a fundamental paradigm shift by being inherently three-dimensional and completely mesh-free—eliminating the dimensional constraints that have historically limited EIT reconstruction to simplified geometries. This integration of interface inverse problems with neural representation of surfaces significantly reduces the dimensionality of optimization variables through latent representations, making the EIT problem more tractable with faster convergence and improved stability, while providing explicit gradient expressions that enable efficient gradient descent algorithms for navigating the latent space to identify optimal interface configurations.

    Compared with classical level-set methods and Newton schemes with shape Hessians that optimize directly in mesh/level-set spaces with hand-tuned regularizers, our latent-space shape calculus brings some concrete advantages tailored to 3D EIT: It reduces the optimization from infinite geometric degrees of freedom to a compact latent vector, yielding substantial memory and time savings per BIE-based forward/adjoint solve; The 3D generative model produces smooth, well-defined surface normals almost everywhere, suppressing discretization artifacts that destabilize shape derivatives; The generative model’s diffeomorphic flow preserves topology during updates, avoiding the spurious splits/merges often observed in unconstrained level sets; We push adjoint shape-derivative integrands through the decoder to obtain latent-space gradients aligned with the learned manifold, improving conditioning and step efficiency. Empirically, these properties translate into marked stability and efficiency gains: we observe effective parameter reduction with rapid early descent and robust convergence under 20\% noise, and consistently low Hausdorff and volume errors in the reconstruction of complex geometries(Figures~\ref{example1:error}, \ref{noise:error}), while classical level-set pipelines typically require heavy TV/Tikhonov tuning and remain sensitive to initialization; In short, our approach keeps the PDE as a hard constraint via a matched BIE forward/adjoint pair but confines updates to a topology-preserving generative manifold, delivering robustness and data efficiency beyond traditional shape optimization.

    The key innovations and advantages of this work can be summarized as follows:
    \begin{itemize}
        \item \textbf{Principled Hybrid Architecture}: A novel "solver-in-the-loop" framework that strategically combines pre-trained neural shape priors with physics-based optimization, achieving superior data efficiency without requiring extensive paired measurement-shape training data.
        
        \item \textbf{Hard Physics Constraint Enforcement}: Maintains governing elliptic PDEs as hard constraints through dedicated BIE solvers at each optimization step, ensuring physical consistency and avoiding the convergence difficulties of soft constraint approaches.
        
        \item \textbf{Implicit Neural Interface Representation}: Parameterizes conductivity interfaces as level sets of neural implicit functions, enabling inherently three-dimensional, completely mesh-free reconstruction that eliminates dimensional constraints and discretization artifacts.
        
        \item \textbf{Efficient Latent Space Optimization}: Significantly reduces optimization dimensionality through neural latent representations, providing explicit gradients for efficient navigation of the solution space with improved convergence and stability.
    \end{itemize}
    
    The remainder of this paper is organized as follows. Section \ref{problem} formulates the EIT interface reconstruction problem as a PDE-constrained shape optimization framework. Section \ref{sec:method} presents our implicit neural paradigm, which incorporates a differentiable neural distance function for efficient shape representation. In Section \ref{sec:analysis}, we establish rigorous convergence guarantees for our optimization scheme, demonstrating that it converges to stationary points of the regularized objective function under appropriate conditions. Section \ref{sec:results} validates our method through comprehensive numerical experiments. Finally, Section \ref{sec:conclusion} discusses the broader implications for inverse problem solving and explores potential extensions to other imaging modalities.
    
    \section{The Problem Setting}\label{problem}

    In Electrical Impedance Tomography (EIT), the conductivity distribution within a domain determines the electric potential field generated by applied currents. The electric potential $u$ satisfies the following elliptic equation\cite{cheney1999electrical}:
    \begin{equation}
        -\nabla \cdot (\sigma(x) \nabla u) = 0 \quad \text{in } \Omega,
    \end{equation}
    where $\sigma(x)$ is the conductivity distribution function within the domain $\Omega$. 
    
    We consider a high-contrast conductivity model \cite{chen2009level} that captures the essential physics of EIT interface reconstruction problems. In this formulation, the conductivity exhibits sharp discontinuities across material interfaces, creating the challenging inverse problem of boundary detection from electrical measurements. 
    
    Let $D \subset \mathbb{R}^3$ be a simply connected bounded domain containing an unknown inclusion $S$ with a $C^2$-regular boundary. To ensure well-posedness of the forward problem, we impose the geometric separation condition $\text{dist}(\partial S, \partial D) > \delta > 0$ for some fixed $\delta > 0$ (see Figure \ref{fig:domain}). The high-contrast conductivity distribution is then characterized by the piecewise constant function:
    \begin{equation}
        \sigma(x) = 
        \begin{cases} 
            \sigma_+ & x \in S \\
            1 & x \in D \setminus \overline{S}
        \end{cases}, \quad \text{with } \sigma_+ \gg 1,
    \end{equation}
    where the large contrast ratio $\sigma_+ \gg 1$ represents scenarios such as metallic implants in biological tissue or highly conductive inclusions in industrial materials.
    
    The resulting conducting domain $\Omega = D \setminus \overline{S}$ exhibits a complex geometry: it is bounded externally by the accessible boundary $\Sigma := \partial D$, where electrodes can be placed for current injection and voltage measurement, and internally by the unknown interface $\Gamma := \partial S$ that we seek to reconstruct. \textcolor{black}{We define the unit normal $n$ on the boundary $\partial\Omega = \Sigma \cup \Gamma$ to be the outward-pointing normal to the conducting domain $\Omega$.} This geometric configuration encapsulates the fundamental challenge of EIT: inferring the internal structure $\Gamma$ from boundary observations on $\Sigma$, a severely ill-posed inverse problem that requires sophisticated regularization techniques for stable reconstruction.
    
    This configuration models practical scenarios where highly conductive inclusions are embedded in moderate-conductivity media, such as titanium orthopedic implants ($\sigma \approx 4 \times 10^6$ S/m) within muscle tissue ($\sigma \approx 0.5$ S/m) \cite{gabriel1996electrical}. Our framework extends naturally to general conductivity distributions.
    \begin{figure}[htb!]
        \centering
        \includegraphics[width=0.5\textwidth]{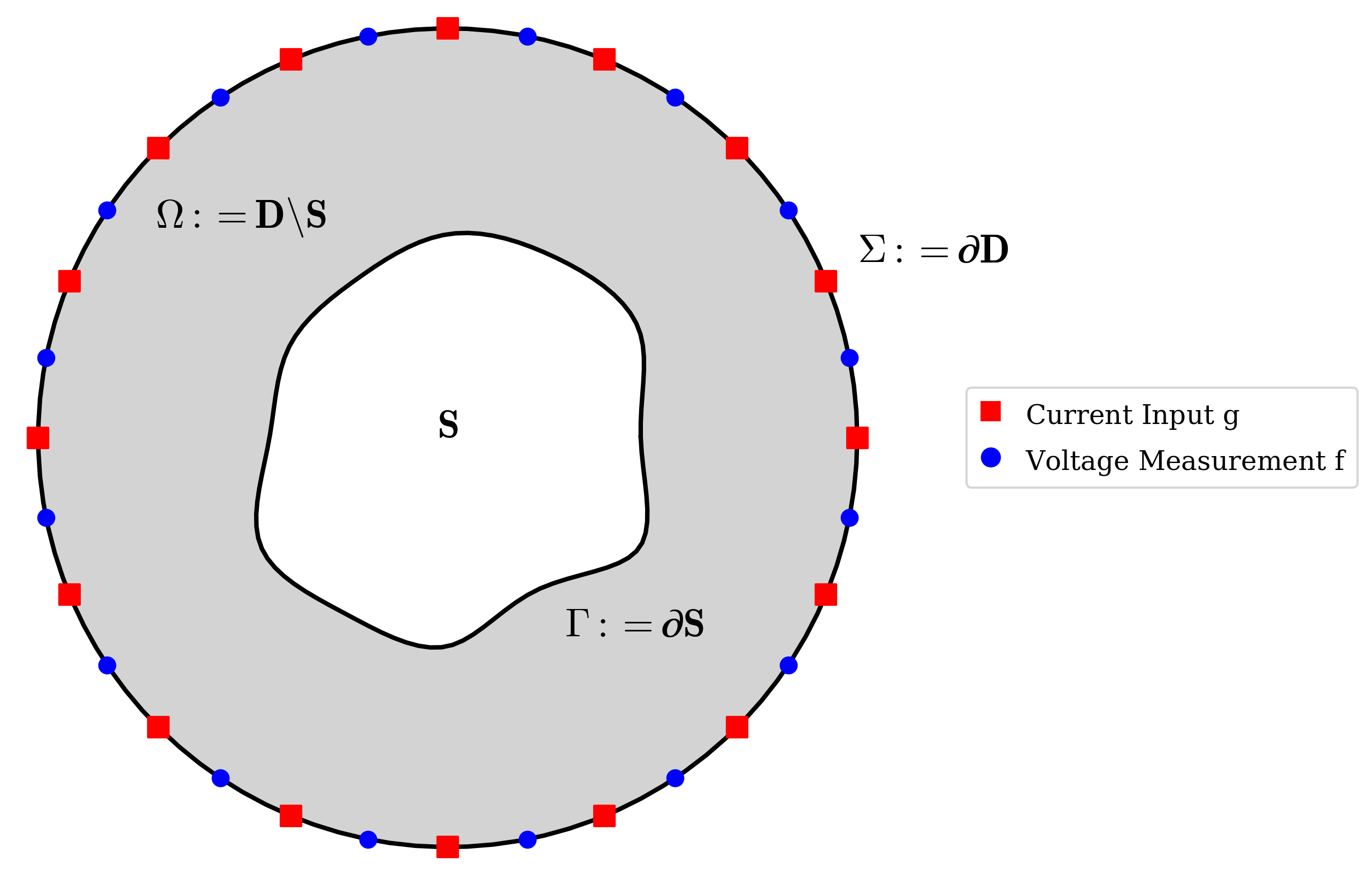}
        \caption{Geometric configuration of the 3D EIT problem (2D cross-sectional view): The conducting domain $\Omega = D \setminus \overline{S}$ (shaded) is bounded externally by $\Sigma := \partial D$ and internally by the interface $\Gamma := \partial S$. Current injection electrodes and voltage measurement points are distributed on $\Sigma$.}
        \label{fig:domain}
    \end{figure}
    
    The forward problem in EIT involves solving a series of elliptic boundary value problems in the conducting domain $\Omega$. For our analysis, we focus on the current injection scenario, which corresponds to the typical experimental setup. Given current patterns $g \in H^{-1/2}(\Sigma)$ satisfying $\int_{\Sigma} g \, ds = 0$ (current conservation), the governing equations for the electric potential $u$ are:
    \begin{equation}\label{state1}
        \begin{cases} 
            -\Delta u = 0 & \text{in } \Omega \\
            u = 0 & \text{on } \Gamma \\
            \partial_n u = g & \text{on } \Sigma
        \end{cases}.
    \end{equation}
    \textcolor{black}{As the contrast ratio approaches infinity, the electric field inside the highly conductive inclusion becomes negligible, making the potential $u$ approximately constant on the interface $\Gamma$. By fixing the gauge, this constant can be set to zero, which leads to the homogeneous Dirichlet boundary condition on $\Gamma$.} The boundary measurements of the voltage $f = u|_{\Sigma} \in H^{1/2}(\Sigma)$ are then collected for the inverse problem.
    
    The inverse problem seeks to recover the interface $\Gamma := \partial S$ from boundary measurements. The standard approach formulates this as an optimization problem that minimizes the misfit between computed and measured potentials:
    \begin{equation}\label{loss1}
        J(\Gamma) := \frac{1}{2} \| u -f \|^2_{L^{2}(\Sigma)} \to \inf_{\Gamma \in \mathcal{A}},
    \end{equation}
    where $\mathcal{A}$ denotes the admissible set of interfaces defined as:
    \begin{equation*}
        \mathcal{A} := \{\Gamma \in C^2 : \text{dist}(\Gamma, \Sigma) > \delta\}.
    \end{equation*}
    
    \subsection{Boundary Integral Formulation}\label{subsec:BIE}
    
    To efficiently solve the system in equation \eqref{state1}, we employ a boundary integral equation (BIE) approach. This powerful methodology reduces the dimensionality of the computational domain from the three-dimensional region $\Omega$ to its two-dimensional boundaries $\Gamma$ and $\Sigma$, offering significant computational advantages for interface problems.
    
    Let $G(\bm{x},\bm{y}) = (4\pi|\bm{x}-\bm{y}|)^{-1}$ denote the fundamental solution of the 3D Laplacian. For any $\phi \in H^{1/2}(\Gamma \cup \Sigma)$, we define the single-layer and double-layer potentials:
    \begin{align}
        \mathcal{S}\phi(\bm{x}) &:= \int_{\Gamma \cup \Sigma} G(\bm{x},\bm{y})\phi(\bm{y})\,\mathrm{d}s(\bm{y}), \quad \bm{x} \in \Omega \\
        \mathcal{K}\phi(\bm{x}) &:= \int_{\Gamma \cup \Sigma} \partial_{n(\bm{y})}G(\bm{x},\bm{y})\phi(\bm{y})\,\mathrm{d}s(\bm{y}), \quad \bm{x} \in \Omega 
    \end{align}
    The boundary integral operators are then defined by taking appropriate traces of these potentials:
    \begin{align}
        \mathcal{S}_{AB}\phi(\bm{x}) &:= \int_B G(\bm{x},\bm{y})\phi(\bm{y})\,\mathrm{d}s(\bm{y}), \quad \bm{x} \in A \\
        \mathcal{K}_{AB}\phi(\bm{x}) &:= \int_B \partial_{n(\bm{y})}G(\bm{x},\bm{y})\phi(\bm{y})\,\mathrm{d}s(\bm{y}), \quad \bm{x} \in A
    \end{align}
    where $(A,B) \in \{(\Gamma,\Gamma), (\Gamma,\Sigma), (\Sigma,\Gamma), (\Sigma,\Sigma)\}$, representing all possible boundary interactions.
    
    For the system governing $u$, the BIE formulation takes the elegant block form:
    \begin{equation}\label{bie_u}
        \begin{pmatrix} 
            \mathcal{S}_{\Gamma\Gamma} & -\mathcal{K}_{\Gamma\Sigma} \\ 
            -\mathcal{S}_{\Sigma\Gamma} & \frac{1}{2}\mathcal{I} + \mathcal{K}_{\Sigma\Sigma}
        \end{pmatrix}
        \begin{pmatrix}
            \partial_n u|_\Gamma \\ 
            u|_\Sigma 	
        \end{pmatrix}
        =
        \begin{pmatrix} 
            \frac{1}{2}\mathcal{I} + \mathcal{K}_{\Gamma\Gamma} & -\mathcal{S}_{\Gamma\Sigma} \\ 
            -\mathcal{K}_{\Sigma\Gamma} & \mathcal{S}_{\Sigma\Sigma}
        \end{pmatrix}
        \begin{pmatrix}
            0 \\ 
            g
        \end{pmatrix}.
    \end{equation}
    This formulation reveals the severe ill-posedness inherent to the EIT problem. \textcolor{black}{While the fundamental solution $G(\bm{x},\bm{y})$ decreases algebraically as $O(|\bm{x}-\bm{y}|^{-1})$, the off-diagonal blocks $\mathcal{S}_{\Sigma\Gamma}$ and $\mathcal{K}_{\Sigma\Gamma}$ map information between strictly disjoint boundaries $\Gamma$ and $\Sigma$. Because the integral kernels are infinitely smooth for $\bm{x} \neq \bm{y}$, these operators are highly compact \cite{kress1989linear}. Consequently, their singular values decay exponentially, which physically corresponds to the exponential attenuation of high-frequency spatial components of the interior information across the distance \cite{Borcea2002}.} As a result, small perturbations in boundary measurements can correspond to vastly different internal configurations, making the inverse problem severely ill-posed and extremely sensitive to measurement noise. \cite{Borcea2002}.
    \begin{figure}[htb!]
        \centering
        \includegraphics[width=0.5\textwidth]{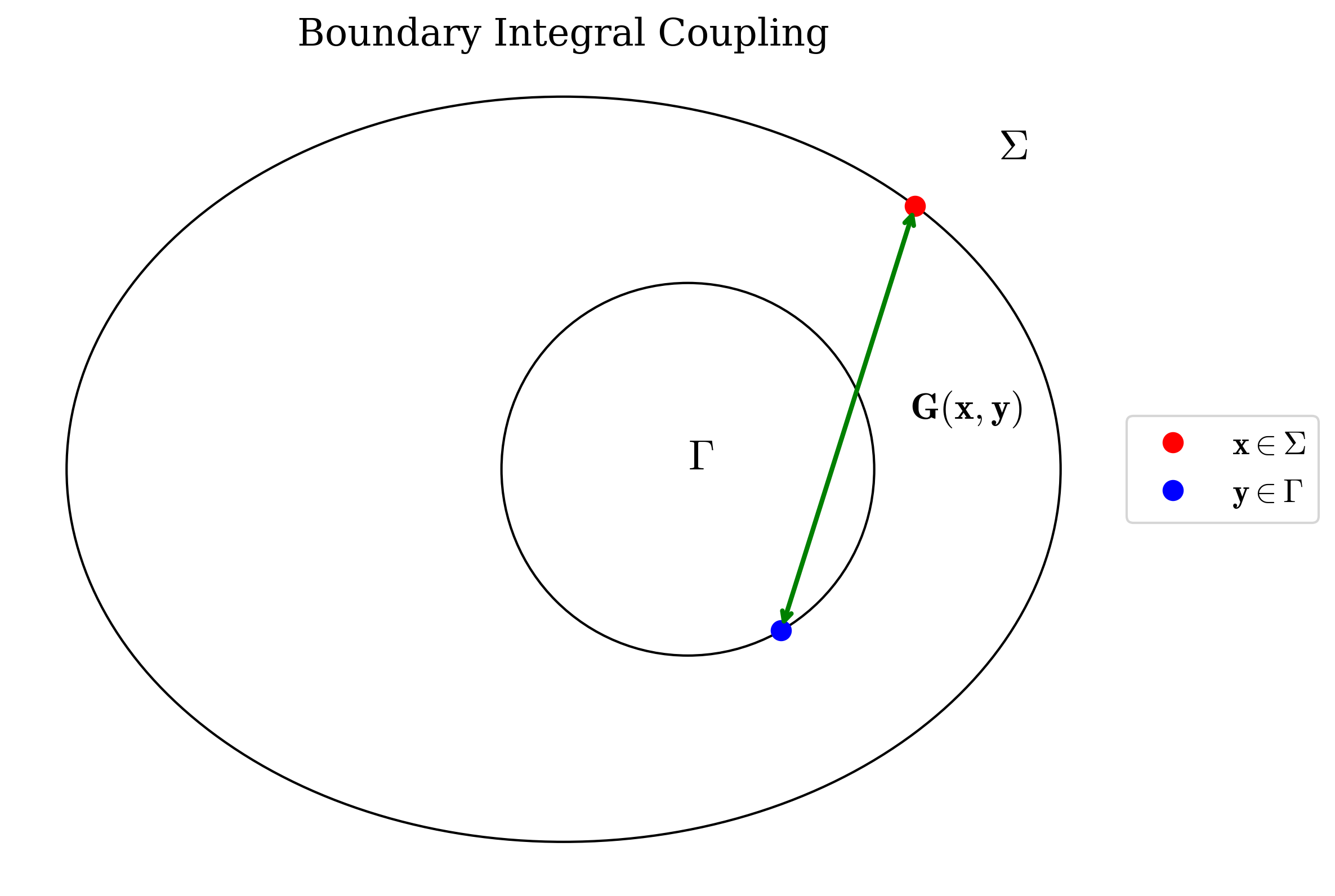}
        \caption{Illustration of boundary integral coupling: The operators $\mathcal{S}_{AB}$ and $\mathcal{K}_{AB}$ capture the interaction between boundaries $A$ and $B$ through the fundamental solution $G(\bm{x},\bm{y})$.}
        \label{fig:bie}
    \end{figure}
    
    \section{Generative Prior-Guided Neural Interface Reconstruction}\label{sec:method}
    
    The EIT inverse problem is severely ill-posed, requiring regularization to obtain unique and stable solutions. Rather than relying on hand-crafted priors (e.g., Tikhonov or Total Variation), we employ a pre-trained 3D generative model as a learned prior over plausible interfaces. This shifts the problem from unconstrained evolution in an infinite-dimensional space to a principled search on a compact, data-driven shape manifold. By integrating classical shape optimization with generative priors, we establish a rigorous framework for PDE-constrained interface reconstruction in 3D.
    
    \subsection{3D Generative Models as Learned Shape Priors}
    
    A critical component of effective EIT reconstruction is the choice of prior over interfaces. Traditional 3D approaches face notable difficulties under noise and sparse measurements. Mesh-based methods struggle with evolving complex geometries, and level-set methods can introduce artifacts. In contrast, a pre-trained 3D generative model provides a data-driven prior that concentrates the search on plausible shapes.
    
    To overcome these fundamental limitations, we employ neural implicit shape representations, specifically the Signed Distance Function (SDF) paradigm. An SDF $f: \mathbb{R}^3 \to \mathbb{R}$ elegantly maps each point in space to its signed distance from the nearest point on the interface, following the convention:
    \begin{equation}
        f(x) = 
        \begin{cases}
        -d(x, \Gamma) & \text{if } x \in S, \\
        d(x, \Gamma) & \text{if } x \in \mathbb{R}^3 \setminus S,
        \end{cases}
    \end{equation}
    where $d(x, \Gamma)$ denotes the Euclidean distance from point $x$ to the interface $\Gamma = \partial S$. This mathematically elegant representation implicitly defines the interface as the zero level set: $\Gamma = \{x \in \mathbb{R}^3 | f(x) = 0\}$.
    
    Following the pioneering DeepSDF approach \cite{park2019deepsdf}, we parameterize this function using a deep neural network $f_\theta: \mathbb{R}^3 \to \mathbb{R}$ with parameters $\theta$. This network architecture learns to approximate the signed distance function throughout the domain, enabling a remarkably compact yet highly expressive representation of complex geometrical structures. A key innovation in this framework is the incorporation of a latent code $z \in \mathbb{R}^d$ that efficiently encodes shape information within a low-dimensional manifold:
    \begin{equation}
    f_\theta: \mathbb{R}^d \times \mathbb{R}^3 \to \mathbb{R}, \quad (z, x) \mapsto f_\theta(z, x).
    \end{equation}
    This latent code $z$ becomes the optimization variable in our EIT reconstruction, allowing us to work in a compact representation space rather than directly manipulating high-dimensional geometric parameters.
    
    While DeepSDF provides a powerful representation framework, it does not inherently guarantee topological consistency during shape manipulation or evolution. This limitation is particularly problematic for EIT interface reconstruction, where preserving the correct topology of anatomical structures or industrial components is essential for accurate diagnosis or defect detection. To address this limitation, we adopt the Neural Diffeomorphic Flow (NDF) framework introduced by Sun et al. \cite{sun2022topology}, which ensures robust topology preservation through carefully designed diffeomorphic constraints. The NDF approach conceptualizes shape deformation as a differentiable flow $\psi_z: \mathbb{R}^3 \to \mathbb{R}^3$, parameterized by the latent code $z$. This flow transforms a template shape into the target shape while strictly preserving topological characteristics.
    
    The cornerstone of the NDF framework is its specialized loss function, which integrates SDF approximation with topological preservation guarantees. The original NDF framework \cite{sun2022topology} utilizes a composite loss function that generally includes:
    \begin{equation}
    \mathcal{L}_{\text{NDF}} = \mathcal{L}_{\text{SDF}} + \lambda_1 \mathcal{L}_{\text{diffeo}} + \lambda_2 \mathcal{L}_{\text{reg}},
    \end{equation}
    where $\mathcal{L}_{\text{SDF}} = \mathbb{E}_x[|f_\theta(z, \psi_z(x)) - \text{SDF}_{\text{target}}(x)|]$ measures how accurately the transformed SDF matches the target SDF values at sampled points, $\mathcal{L}_{\text{diffeo}} = \mathbb{E}_x[|\det(\nabla_x \psi_z) - 1|]$ enforces the diffeomorphic property by constraining the Jacobian determinant of the flow $\psi_z$ to remain positive and close to unity, and $\mathcal{L}_{\text{reg}}$ provides additional regularization on the flow field to ensure smoothness. This formulation directly connects the flow field $\psi_z$ with the SDF approximation task, ensuring that the transformed shapes maintain their topological structure.
    
    For our EIT application, we employ a pre-trained generative model that has already learned to generate topologically consistent shapes within its latent space, providing us with a learned shape prior. This allows us to navigate the latent space during reconstruction while inheriting the topological guarantees provided by the diffeomorphic constraints — a critical advantage for accurate interface representation in both medical imaging and industrial inspection applications where topological errors could lead to misdiagnosis or false detections.
    
    \subsection{Generative Prior-Guided Optimization on Learned Shape Manifolds}
    
    To address the severe ill-posedness, we transform the challenging shape optimization problem from the infinite-dimensional admissible set $\mathcal{A}$ into a principled search within the learned manifold of plausible shapes encoded in latent space $\mathcal{M}$ (Figure \ref{fig:latent_space}). This dimensionality reduction represents a paradigm shift in our approach to EIT interface reconstruction. Given a pre-trained 3D generative model, we seek the optimal latent code $z\in\mathcal{M}$ such that
    \begin{equation}\label{eq:latent_opt}
        \min_{z\in\mathcal{M}}\mathcal{L}(z) = J(\Gamma_z) := \frac{1}{2} \|u-f\|_{L^2(\Sigma)}^2,
    \end{equation}
    where $u$ solves the state equation \eqref{state1} with boundary $\Gamma_z$ which is implicitly defined as the zero level set of our neural network:
    \begin{equation*}
    \Gamma_z = \{ x\in \mathbb{R}^3| f_\theta(z,x)=0\}.
    \end{equation*}
    Following \cite{Younes2019}, we assume the regularity condition that $\nabla_x f_\theta(z,x)\neq 0$ whenever $f_\theta(z,x)=0$, ensuring well-defined surfaces with consistent normal vectors. This condition is naturally satisfied by properly trained neural SDF models that accurately approximate true signed distance functions.

    The pre-trained 3D generative model, trained on representative shape datasets, induces a low-dimensional manifold capturing the distribution of plausible geometries. Optimization in the latent space constitutes a principled search over this manifold to best explain the measurements. This provides exceptionally potent regularization that constrains solutions to be anatomically plausible, effectively addressing the ill-posedness inherent in EIT reconstruction.
    
    This latent representation fundamentally transforms the optimization landscape by dramatically reducing problem dimensionality — from the infinite-dimensional space of all possible interfaces to a compact latent space, typically of dimension $d \approx 256 \ll \infty$. Such dimensionality reduction not only enhances computational efficiency but also provides implicit regularization that helps mitigate the severe ill-posedness inherent in EIT reconstruction. The resulting optimization process can be interpreted as a projection of physics-based gradients onto the learned shape manifold, where physics dictates the descent direction while the generative prior constrains the path to anatomically plausible solutions.
    
    \begin{figure}[htb!]
        \centering
        \includegraphics[width=0.95\textwidth]{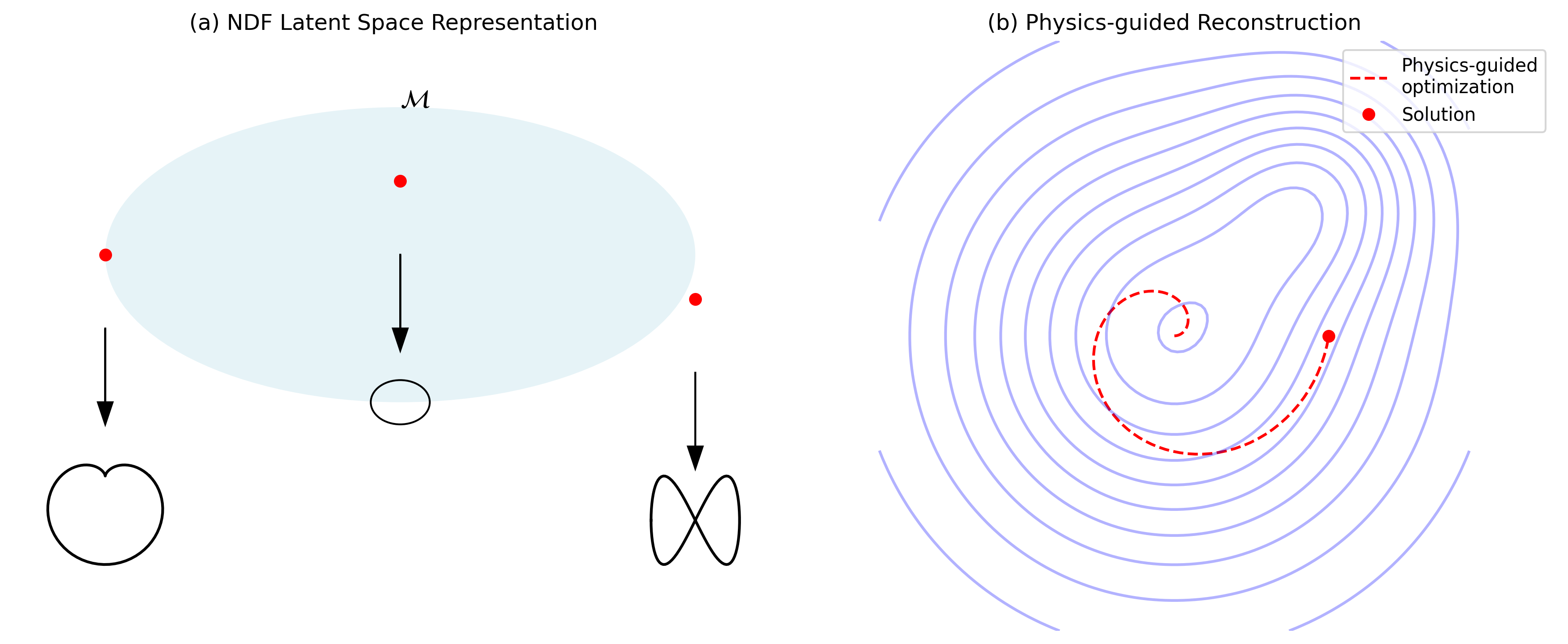}
        \caption{Integration of a pre-trained 3D generative prior with EIT reconstruction: (a) The generative model encodes plausible shapes into a low-dimensional manifold $\mathcal{M}$. (b) Our physics-informed optimization in $\mathcal{M}$ minimizes the EIT data misfit while constraining solutions to the learned manifold.}
        \label{fig:latent_space}
    \end{figure}
    
    \subsection{Adjoint-Based Gradients in Latent Space}
    
    To solve the optimization problem \eqref{eq:latent_opt} efficiently, we derive analytical expressions for the gradient of the objective function with respect to the latent variables. This requires extending classical shape calculus to optimization on a generative latent manifold. Let $V:\mathbb{R}\times\mathbb{R}^3\textcolor{black}{\rightarrow} \mathbb{R}^3$ be a velocity field and let $x(t,X)$ be the solution to
    
    \begin{equation}
    \frac{\mathrm{d}}{\mathrm{d}t}x(t,X)=V(t,x(t,X)), \quad x(0,X)=X.
    \end{equation}
    The transformation $T_t(X)=x(t,X)$ maps $\Gamma$ to $\Gamma_t=T_t(\Gamma)$. The shape derivative in direction $V$ is defined as:
    \begin{equation}
    dJ(\Gamma;V)=\lim_{t\rightarrow 0^+}\frac{J(\Gamma_t(V))-J(\Gamma)}{t}.
    \end{equation}
    By bridging this classical framework with our neural implicit representation, we derive the following key result:
    
    \begin{theorem}[Gradient Representation]\label{thm:gradient}
       \textcolor{black}{Let $\Omega_z$ be the domain bounded externally by $\Sigma$ and internally by $\Gamma_z$}, and assume that $\Omega_z$ is a bounded domain of class $C^2$. Let $\mathcal{L}(z)$ be defined in \eqref{eq:latent_opt}. If $\nabla_xf_\theta$ does not vanish on $\Gamma_z$, \textcolor{black}{and $n = -\frac{\nabla_x f_\theta}{\|\nabla_x f_\theta\|}$ denotes the outward normal to $\Omega_z$ on $\Gamma_z$}, then:
        \begin{equation}\label{eq:gradient1}
        \nabla \mathcal{L}(z)= \int_{\Gamma_z} \frac{\partial u}{\partial n} \frac{\partial w}{\partial n} \frac{\nabla_z f_\theta}{\|\nabla_x f_\theta\|} \mathrm{d}s,
        \end{equation}
        where $w$ solves the adjoint equation:
        \begin{equation}\label{eq:adjoint}
        \begin{cases} 
            \Delta w = 0 & \text{in } \Omega_z, \\
            w = 0 & \text{on } \Gamma_z, \\
            \partial_n w = u-f & \text{on } \Sigma.
        \end{cases}
        \end{equation}
    \end{theorem}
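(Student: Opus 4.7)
The plan is to combine classical Hadamard-type shape calculus with a chain-rule passage from latent-code perturbations to normal velocities on $\Gamma_z$. In outline: I first derive the shape derivative $dJ(\Gamma_z;V)$ as a boundary integral over $\Gamma_z$ involving the product $\partial_n u\,\partial_n w$ and the normal velocity $V\cdot n$; I then express $V\cdot n$ in terms of a latent perturbation $\eta$ via the implicit function theorem applied to $f_\theta(z,x)=0$; finally I read off the coefficient of $\eta$ as the gradient $\nabla\mathcal{L}(z)$.

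For the shape-derivative step, I would use a smooth velocity $V$ vanishing near $\Sigma$ and invoke the standard shape-differentiability theory for the Dirichlet--Neumann Laplace problem on $C^2$ domains (e.g.\ Sokolowski--Zolesio or Delfour--Zolesio). This yields an Eulerian shape derivative $u'\in H^1(\Omega_z)$ which is harmonic, satisfies $\partial_n u'=0$ on $\Sigma$, and carries the inhomogeneous Dirichlet trace $u'=-\partial_n u\,(V\cdot n)$ on $\Gamma_z$; the last identity follows by differentiating $u_t\equiv 0$ along $\Gamma_t$ and observing that $\nabla_\tau u=0$ on $\Gamma_z$. Differentiating the cost directly gives $dJ(\Gamma_z;V)=\int_\Sigma(u-f)\,u'\,ds$, and a single application of Green's identity to the harmonic pair $(u',w)$, combined with the adjoint boundary data in \eqref{eq:adjoint}, collapses the $\Sigma$-integrals and produces the Hadamard formula
\begin{equation*}
dJ(\Gamma_z;V)=\int_{\Gamma_z}\frac{\partial u}{\partial n}\,\frac{\partial w}{\partial n}\,(V\cdot n)\,ds.
\end{equation*}

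For the latent-space step, the hypothesis $\nabla_x f_\theta\neq 0$ on $\Gamma_z$ lets me apply the implicit function theorem to $f_\theta(z,x)=0$. Tracking a surface point as $z$ moves in direction $\eta$ and differentiating $f_\theta(z+s\eta,x(s))=0$ at $s=0$ gives $\nabla_x f_\theta\cdot V+\nabla_z f_\theta\cdot\eta=0$. Choosing the unit normal $n$ consistent with the outward orientation of $\Omega_z$ (so that $n=-\nabla_x f_\theta/\|\nabla_x f_\theta\|$ under the SDF convention $f_\theta<0$ inside $S$) yields
\begin{equation*}
V\cdot n=\frac{\nabla_z f_\theta\cdot\eta}{\|\nabla_x f_\theta\|}.
\end{equation*}
Inserting this into the Hadamard formula and letting $\eta$ range over $\mathbb{R}^d$ identifies the gradient as the vector-valued integrand multiplying $\eta$, which is precisely \eqref{eq:gradient1}.

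The main technical obstacle is the first step: rigorously establishing shape differentiability of $u$ on admissible $C^2$ interfaces and, in particular, justifying the trace formula $u'=-\partial_n u\,(V\cdot n)$ on $\Gamma_z$. This requires verifying that latent perturbations induce transformations $T_t$ of sufficient regularity for the pulled-back state to be differentiable in a fixed Sobolev space, and that $u$ is regular enough near $\Gamma_z$ for $\partial_n u$ to be a well-defined $L^2(\Gamma_z)$ function; both are delicate but follow from standard elliptic regularity once the $C^2$ regularity of $\Gamma_z$ is invoked. Once these are granted, the Green's identity computation and the implicit function theorem passage are essentially routine.
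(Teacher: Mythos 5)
Your proposal is correct and follows essentially the same route as the paper, whose one-sentence proof simply invokes the chain rule, the adjoint-based Hadamard shape derivative formula (deferred to a cited reference), and the relation between latent perturbations and the induced normal velocity of the zero level set. Your write-up in fact supplies the Green's-identity computation with the adjoint state $w$ and the implicit-function-theorem step $V\cdot n=\nabla_z f_\theta\cdot\eta/\|\nabla_x f_\theta\|$ that the paper leaves implicit.
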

    
    \begin{proof}
    The proof follows from the chain rule and the shape derivative formula in \cite{chen2023solving}. Specifically, we use the relation between variations in the latent space and the corresponding shape variations through the implicit neural representation, applying the adjoint method to derive \eqref{eq:gradient1}.
    \end{proof}
    
    This gradient formulation provides a rare mathematical bridge between the physics-driven world of shape derivatives and the data-driven, probabilistic world of generative models. The integrand in \eqref{eq:gradient1} elegantly combines three critical components: (1) the sensitivity of the forward solution $\frac{\partial u}{\partial n}$, (2) the sensitivity of the adjoint solution $\frac{\partial w}{\partial n}$, and (3) the shape-to-latent mapping $\frac{\nabla_z f_\theta}{\|\nabla_x f_\theta\|}$, linking variations in data-misfit to perturbations in the learned shape manifold. This formulation provides a rigorous bridge between physics-driven shape derivatives and data-driven generative latent variables, enabling efficient navigation of the learned manifold toward solutions that match the measurements.
    
    Building on this gradient expression, we can update $z$ using stochastic gradient descent algorithms. In our experiments, we adopt the Adam optimizer, which efficiently combines gradient and momentum information, although other gradient-based methods could also be employed. The complete algorithm description is presented in Algorithm \ref{alg:main}. Our approach offers several key advantages, including efficient low-dimensional optimization enabled by the pre-trained generative prior. This combination enhances overall performance, particularly for challenging 3D high-contrast EIT reconstruction problems where maintaining correct topology is crucial for accurate interpretation.
    \begin{algorithm}[t]
    \caption{Generative Prior-Guided Neural Interface Reconstruction for 3D EIT}
    \label{alg:main}
    \begin{algorithmic}[1]
        \State \textbf{Input:} Voltage measurements, pre-trained 3D generative model $f_\theta$.
        \State \textbf{Initialize:} Latent code $z_0$, learning rates $\{\alpha_k\}$, max iterations $N$.
        \For{$k = 0$ to $N$}
            \State Generate interface $\Gamma_k = \{x | f_\theta(z_k,x)=0\}$ via marching cubes.
            \State Solve forward problem \eqref{state1} for $u_k$ on $\Omega_k$.
            \State Solve adjoint problem \eqref{eq:adjoint} for $w_k$.
            \State Generate an unbiased estimate $g_k$ of $\nabla \mathcal{L}(z_k)$ using the expression in \eqref{eq:gradient1}.
            \State Update $z$ using a stochastic gradient descent method with the estimate $g_k$. 
            \If{convergence criteria met}
                \State \textbf{break}
            \EndIf
        \EndFor
        \State \Return Reconstructed interface $\Gamma_N$.
    \end{algorithmic}
    \end{algorithm}
    
    \section{Convergence Analysis}\label{sec:analysis}
    
    We now establish the convergence properties of Algorithm \ref{alg:main} in the stochastic gradient descent (SGD) framework\cite{bottou2018optimization,JinZhouZou2020}. Given an initial guess $z_0$, the SGD algorithm updates the latent variable by 
    $$z_{k+1}= z_k-\alpha_k g_k, ~k=0,1,2, \cdots$$
    where $\alpha_k$ is the stepsize at iteration $k$ and $g_k$ is an unbiased estimate of $\nabla L(z_k)$. The key component of the convergence analysis is the shape derivative regularity estimate.
    
    First, we define the second-order shape derivative. Let $V_1,V_2:\mathbb{R}\times\mathbb{R}^3\mapsto\mathbb{R}^3$ be two differentiable velocity fields. Let the objective $J(\Gamma)$ be shape differentiable in the sense of Hadamard \cite{Delfour2011}. Then we define the second derivative by
    \begin{equation}
        d^2J(\Gamma;V_1;V_2)=\lim_{t\rightarrow 0^+}\frac{\textcolor{black}{dJ(\Gamma_t(V_2);V_1(t))}-dJ(\Gamma;V_1(0))}{t}.
    \end{equation}
    Following \cite[Theorem 6.2, Chapter 9]{Delfour2011}, we have
    \begin{equation}\label{eq:second_deriv}
        d^2J(\Gamma;V_1;V_2)=d^2J(\Gamma;V_1(0);V_2(0))+dJ(\Gamma;V_1'(0)),
    \end{equation}
    where
    \begin{align}
        d^2J(\Gamma;V_1(0);V_2(0))&:=\lim_{t\rightarrow 0^+}\frac{\textcolor{black}{dJ(T_t(\Gamma,V_2);V_1(0))-dJ(\Gamma;V_1(0))}}{t},\\
        dJ(\Gamma;V(0))&:= \lim_{t\rightarrow 0^+}\frac{J((T_t(\Gamma,V))-J(\Gamma)}{t},\\
        T_t(\Gamma,V)&=\{x+tV(0,x)|x\in\Gamma\},
    \end{align}
    and
    \begin{equation}
        V'(0)=\lim_{t\rightarrow0+}\frac{V(t,x)-V(0,x)}{t}.
    \end{equation}
    For a detailed discussion of second-order shape derivatives, we refer to \cite{Delfour2011,simon1989second} and \cite[Appendix]{hettlich1999second}.
    
    We begin with a key estimate on the shape derivatives.
    \begin{theorem}[Shape Derivative Bounds]\label{thm:hessian}
    Let $\Gamma$ be of class \textcolor{black}{$C^3$}. Then there exists $C>0$ independent of $V$ such that:
    \begin{align}
        |dJ(\Gamma;V(0))|&\le C\|V(0)\|_{C^1(\Gamma;\mathbb{R}^3)}, \quad\forall  V\in C([0,\varepsilon);C^1(\overline{\Omega},\mathbb{R}^3))\\
        |d^2J(\Gamma;V_1(0);V_2(0))|&\le C\|V_{1}(0)\|_{\textcolor{black}{C^{2}}(\Gamma,\mathbb{R}^3)}\|V_{2}(0)\|_{C^{2}(\Gamma,\mathbb{R}^3)},
    \end{align}
    for all $V_1,V_2\in C([0,\varepsilon);C^2(\overline{\Omega},\mathbb{R}^3))$, where $\varepsilon>0$ is sufficiently small.
    \end{theorem}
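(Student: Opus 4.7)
The plan is to derive explicit Hadamard representations for $dJ$ and $d^2J$ and then exploit the high regularity $\Gamma\in C^5$ to absorb all $u$- and $w$-dependent factors into constants independent of $V$, leaving only low-order norms of the velocity on the right-hand side. Throughout, I would use standard mixed Dirichlet-Neumann elliptic regularity for \eqref{state1} and \eqref{eq:adjoint} with $g$ fixed to bootstrap $u,w\in H^k(\Omega)$ for any $k$, so that $\partial_n u,\partial_n w\in C(\Gamma)$ with norms depending only on $(\Gamma,g)$.

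For the first bound, I would extend the derivation of Theorem~\ref{thm:gradient} from the latent-space direction to an arbitrary velocity field $V(0)$, obtaining the classical Hadamard formula
\begin{equation*}
dJ(\Gamma;V(0)) \;=\; \int_\Gamma \partial_n u\,\partial_n w\,(V(0)\cdot n)\,\mathrm{d}s,
\end{equation*}
and then bound it by H\"older as $|dJ(\Gamma;V(0))|\le\|\partial_n u\,\partial_n w\|_{L^1(\Gamma)}\,\|V(0)\|_{C(\Gamma;\mathbb{R}^3)}\le C\|V(0)\|_{C(\Gamma;\mathbb{R}^3)}$, using the uniform bound on $\partial_n u,\partial_n w$ established above.

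For the second bound, I would differentiate the first-order integrand along the flow of $V_1$ using the Lagrangian formalism and the tangential-divergence identity on $\Gamma$ to obtain a Hadamard-type expression for $d^2J(\Gamma;V_1(0);V_2(0))$ with three classes of boundary contributions: (i) cross terms $\partial_n u'[V_1]\,\partial_n w\,(V_2\cdot n)$ and its symmetric counterpart with $w'[V_1]$, where the local shape derivatives $u'[V_1],\,w'[V_1]$ solve elliptic problems with Dirichlet data $-\partial_n u\,(V_1\cdot n)$ and $-\partial_n w\,(V_1\cdot n)$ on $\Gamma$; (ii) curvature corrections of the form $\kappa(V_1\cdot n)(V_2\cdot n)\,\partial_n u\,\partial_n w$ arising from the derivative of the surface element; (iii) tangential terms containing $\nabla_\Gamma(V_i\cdot n)$ and tangential components of $V_i$. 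Each contribution is then controlled by Cauchy-Schwarz on $\Gamma$, provided one shows $\partial_n u'[V_1],\partial_n w'[V_1]\in L^2(\Gamma)$ with norms bounded by $C\|V_1(0)\|_{C^3(\Gamma;\mathbb{R}^3)}$; the curvature and tangential pieces are dominated by $\|V_i(0)\|_{C^1(\Gamma;\mathbb{R}^3)}\le\|V_i(0)\|_{C^3(\Gamma;\mathbb{R}^3)}$.

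The hardest part is the regularity bookkeeping in this second-order calculation: each shape-derivative solve transports Dirichlet data from a Sobolev space on $\Gamma$ into the domain and back to $\Gamma$ at the cost of one half-order, while the tangential differentiation from the surface-divergence identity costs another order. The hypothesis $V_i(0)\in C^3(\Gamma;\mathbb{R}^3)$ is precisely what keeps the chain $C^3(\Gamma)\hookrightarrow H^{5/2}(\Gamma)\to H^3(\Omega)\to H^{1/2}(\Gamma)\hookrightarrow L^2(\Gamma)$ closed after these operations, so that every factor entering Cauchy-Schwarz on $\Gamma$ lies in $L^2(\Gamma)$. Once this bookkeeping is in place, all constants depend only on $(\Gamma,g)$, and the claimed bilinear estimate follows.
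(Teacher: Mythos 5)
Your plan is sound and reaches both estimates, but it follows a genuinely different route from the paper. You work on the inner boundary: you extend the adjoint-based Hadamard formula of Theorem~\ref{thm:gradient} to an arbitrary field, bound $dJ$ by H\"older on $\Gamma$, and then obtain $d^2J$ by differentiating the first-order integrand along the flow of $V_1$, which forces you to introduce and estimate the shape derivatives of \emph{both} the state and the adjoint, plus curvature and tangential-gradient corrections from the surface element and the normal. The paper instead never differentiates the adjoint representation: it introduces the point-evaluation operator $F(\Gamma)=u(x_0)$ for $x_0\in\Sigma$, imports the explicit formulas for $u'$ and $u''$ from \cite{afraites2008second} (equations \eqref{first order} and \eqref{secondorder}), and estimates $\sup_\Sigma|u'|$ and $\sup_\Sigma|u''|$ by chaining interior elliptic regularity ($C^0(\Sigma)\lesssim H^1(\Omega)$), the mixed-boundary energy estimate of Lemma~\ref{lemma:energy}, and Sobolev product rules that transfer everything to the $H^{1/2}(\Gamma)$ norm of the Dirichlet data, where the velocity appears; the passage from $F$ to the quadratic functional $J$ is a chain-rule step deferred to \cite{chen2023solving}. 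What each buys: the paper's route avoids the adjoint's shape derivative entirely and leans on published second-derivative formulas, at the cost of an implicit final assembly step; your route is self-contained at the level of $J$ and yields the $C(\Gamma)$ bound for $dJ$ directly (the paper's own computation in Lemma~\ref{shapebound} actually only lands on $C^1(\Gamma)$ before the statement claims $C(\Gamma)$), at the cost of roughly doubling the elliptic solves to control. One bookkeeping item you must add for your route to close: the adjoint's local shape derivative $w'[V_1]$ does \emph{not} carry homogeneous Neumann data on $\Sigma$ --- since the residual $u-f$ is itself shape-dependent, $w'[V_1]$ satisfies $\partial_n w'[V_1]=u'[V_1]$ on $\Sigma$ in addition to the Dirichlet condition $-\partial_n w\,(V_1\cdot n)$ on $\Gamma$. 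This produces an extra contribution in your class (i), but it is harmless: it is controlled by $\|u'[V_1]\|_{H^{-1/2}(\Sigma)}\le C\|V_1(0)\|_{C^1(\Gamma;\mathbb{R}^3)}$ via your first-order estimate, so the final bilinear bound in $\|V_1(0)\|_{C^3}\|V_2(0)\|_{C^3}$ is unaffected. Equivalently, differentiating $dJ=\int_\Sigma(u-f)\,u'[V_2]\,\mathrm{d}s$ instead of the $\Gamma$-integral makes this term appear naturally as $\int_\Sigma u'[V_1]u'[V_2]\,\mathrm{d}s$, which is essentially how the paper's decomposition handles it.
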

    
    The proof requires several preliminary results.
    
    \begin{lemma}[Energy Estimate]\label{lemma:energy} 
    \textcolor{black}{Let $\Omega \subset \mathbb{R}^n$ be a bounded Lipschitz domain representing the conducting region (i.e., $\Omega = D \setminus \overline{S}$), with its boundary explicitly partitioned into two disjoint parts: $\partial\Omega = \Gamma \cup \Sigma$. We consider the following auxiliary boundary value problem restricted strictly to the domain $\Omega$:}
    \begin{equation}
        \begin{cases} 
            \Delta u = 0 & \text{in } \Omega, \\
            u = f & \text{on } \Gamma, \\
            \partial_n u = 0 & \text{on } \Sigma, 
        \end{cases}
    \end{equation}
    where $f \in H^{1/2}(\Gamma)$. Then there exists $C = C(\Omega)$ such that
    \begin{equation}
        \|u\|_{H^1(\Omega)} \leq C\|f\|_{H^{1/2}(\Gamma)}.
    \end{equation}
    \end{lemma}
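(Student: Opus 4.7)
The plan is to reduce to a homogeneous Dirichlet problem and then invoke Lax--Milgram on the closed subspace $H^1_\Gamma(\Omega):=\{v\in H^1(\Omega):v|_\Gamma=0\}$. First, I would recast the mixed boundary value problem in its weak form: find $u\in H^1(\Omega)$ with $u|_\Gamma=f$ satisfying
\begin{equation*}
\int_\Omega \nabla u\cdot\nabla\varphi\,dx=0\qquad\forall\,\varphi\in H^1_\Gamma(\Omega),
\end{equation*}
where the homogeneous Neumann condition on $\Sigma$ is absorbed as a natural boundary condition.

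Next, I would use the surjectivity of the trace operator $\gamma_\Gamma:H^1(\Omega)\to H^{1/2}(\Gamma)$ (valid on Lipschitz domains) to obtain an extension $F\in H^1(\Omega)$ of $f$ with
\begin{equation*}
F|_\Gamma=f,\qquad \|F\|_{H^1(\Omega)}\le C_1\,\|f\|_{H^{1/2}(\Gamma)}.
\end{equation*}
Setting $v:=u-F$ reduces the problem to finding $v\in H^1_\Gamma(\Omega)$ with
\begin{equation*}
a(v,\varphi):=\int_\Omega\nabla v\cdot\nabla\varphi\,dx=-\int_\Omega\nabla F\cdot\nabla\varphi\,dx=:\ell(\varphi).
\end{equation*}

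The core analytic step is to verify coercivity of $a$ on $H^1_\Gamma(\Omega)$. Since $\Gamma$ has positive $(n-1)$-dimensional Hausdorff measure (being a full connected component of $\partial\Omega$), the Poincar\'e--Friedrichs inequality gives $\|v\|_{L^2(\Omega)}\le C_P\|\nabla v\|_{L^2(\Omega)}$ for all $v\in H^1_\Gamma(\Omega)$, so $a(v,v)\ge (1+C_P^2)^{-1}\|v\|_{H^1(\Omega)}^2$. Continuity of $a$ and $\ell$ on $H^1_\Gamma(\Omega)$ is immediate by Cauchy--Schwarz, with $\|\ell\|\le \|\nabla F\|_{L^2(\Omega)}\le\|F\|_{H^1(\Omega)}$. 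Lax--Milgram then provides a unique $v$ with
\begin{equation*}
\|v\|_{H^1(\Omega)}\le C_2\,\|F\|_{H^1(\Omega)}\le C_1C_2\,\|f\|_{H^{1/2}(\Gamma)}.
\end{equation*}
Finally, writing $u=v+F$ and applying the triangle inequality yields the desired bound with $C=C_1(1+C_2)$.

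I expect the main technical obstacle to be the coercivity step, specifically justifying the Poincar\'e inequality on $H^1_\Gamma(\Omega)$. On a general Lipschitz domain this requires $\Gamma$ to be a set of positive surface measure, which is guaranteed here by the geometric assumption $\mathrm{dist}(\Gamma,\Sigma)>\delta>0$ forcing $\Gamma$ to be a nontrivial component of $\partial\Omega$. The remaining ingredients (trace extension, Lax--Milgram) are entirely standard for bounded Lipschitz domains and do not require additional regularity of $\Gamma$ beyond what is assumed.
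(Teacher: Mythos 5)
Your proof is correct, but it takes a genuinely different route from the paper's. The paper argues by a direct a priori energy identity: integrating by parts to get $\int_\Omega|\nabla u|^2\,dx=\int_\Gamma f\,\partial_n u\,dS$, bounding the right side by $\|f\|_{H^{1/2}(\Gamma)}\|\partial_n u\|_{H^{-1/2}(\Gamma)}\le C\|f\|_{H^{1/2}(\Gamma)}\|u\|_{H^1(\Omega)}$, then closing the estimate with a modified Poincar\'e inequality (involving the trace of $u$ on $\Gamma$) and Young's inequality. You instead lift the Dirichlet data via a bounded right inverse of the trace operator, reduce to a homogeneous problem on $H^1_\Gamma(\Omega)$, and invoke Lax--Milgram with coercivity supplied by the Friedrichs inequality on that subspace. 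The trade-off: the paper's route is shorter but presupposes the existence of the solution and, more delicately, that the weak normal derivative of a merely-$H^1$ harmonic function is well defined in $H^{-1/2}(\Gamma)$ with norm controlled by $\|u\|_{H^1(\Omega)}$ --- standard via the Green's-identity definition, but left implicit. Your route yields existence and uniqueness as a by-product, never needs to make sense of $\partial_n u$ on $\Gamma$, and isolates the one genuinely geometric hypothesis (that $\Gamma$ has positive surface measure, guaranteed here since $\Gamma$ and $\Sigma$ are separated components of $\partial\Omega$), which you correctly identify as the crux of coercivity. Both arguments are valid on a bounded Lipschitz domain; either would serve the paper's purposes.
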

    \begin{proof}
    By integration by parts, we can easily get
    \begin{equation}
        \int_\Omega |\nabla u|^2 dx = \int_{\Gamma} f \partial_n u \, dS.
    \end{equation}
    Then using \textcolor{black}{Cauchy-Schwarz inequality and trace theorem}, we have
    \begin{equation}
        \left|\int_{\Gamma} f \partial_n u \, dS\right| \leq \|f\|_{H^{1/2}(\Gamma)} \|\partial_n u\|_{H^{-1/2}(\Gamma)} \leq C_1 \|f\|_{H^{1/2}(\Gamma)} \|u\|_{H^1(\Omega)}.
    \end{equation}
    \textcolor{black}{The generalized Poincaré inequality incorporating boundary trace norms\cite{evans2022partial} gives that}
    \begin{equation}
        \|u\|_{L^2(\Omega)} \leq C_2\left( \|\nabla u\|_{L^2(\Omega)} + \|f\|_{L^2(\Gamma)} \right).
    \end{equation}
    Combining these inequalities and applying Young's inequality yields the result.
    \end{proof}
    
    To prove Theorem \ref{thm:hessian}, we analyze the measurement operator $F:\mathcal{A} \rightarrow \mathbb{R}$:
    \begin{equation}
        F(\Gamma) = u(x_0)
    \end{equation}
    which maps an admissible boundary $\Gamma$ to the value of \textcolor{black}{the state function} $u$ at $x_0\in \Sigma$.
    By \cite[Theorem 2]{afraites2008second}, $F$ is twice continuously differentiable with
    \begin{equation}
        d^2 F(\Gamma;V_1(0);V_2(0))=u''(x_0),
    \end{equation}
    where $u''$ solves
    \begin{equation}\label{secondorder}
        \begin{cases}
            \Delta u^{\prime\prime}=0 & \text{in } \Omega,\\
            u^{\prime\prime}=\psi & \text{on } \Gamma,\\
            \partial_n u^{\prime \prime}=0 & \text{on } \Sigma,
        \end{cases}
    \end{equation}
    with \textcolor{black}{the boundary data}
    \begin{align}
        \psi =& (V_{1,n}V_{2,n}H-V_{1,\tau}\cdot (Dn V_{2,\tau}))\partial_n u-(V_{1,n}\partial_n u_2^\prime+V_{2,n}\partial_n u_1^\prime)\nonumber\\
        &+(V_{1,\tau} \cdot \nabla V_{2,n}+V_{2,\tau} \cdot \nabla V_{1,n})\partial_n u,
    \end{align}
     where $u$ is the solution of the state problem, and $n$ denotes the unit outward normal to the boundary $\Gamma$. The notation $V_{j,n} = V_j(0) \cdot n$ and \textcolor{black}{$V_{j,\tau} = V_j(0) - V_{j,n}n$} denote the normal scalar component and tangential vector projection of the velocity field $V_j(0)$, respectively. $H$ denotes the mean curvature of the boundary $\Gamma$, \textcolor{black}{and $Dn$ represents the derivative of the normal vector field on $\Gamma$}. Moreover, $u_j'$ for $j = 1,2$, is the solution to the following boundary value problem
    \begin{equation}\label{first order}
        \left\{ \begin{aligned}
            \Delta u_j^\prime&=0 \quad \text{ in } \Omega,\\
            u_j^\prime&=\textcolor{black}{-V_{j,n} \partial_n u} \quad\text{ on } \Gamma,\\
            \partial_n u_j^\prime&=0 \quad\text{ on } \Sigma.
        \end{aligned}\right.
    \end{equation}
    Still by \cite[Theorem 1]{afraites2008second}, we have
    \begin{equation}
        \textcolor{black}{d F(\Gamma;V_j(0))=u_j'(x_0), \quad j=1,2.}
    \end{equation}
     For the operator $F$, we have the following useful estimates.
    \begin{lemma}\label{shapebound}
        Under the assumption of Theorem \ref{thm:hessian}, the following estimates hold
        \begin{align}
            |dF(\Gamma;V(0))|&\le C\|V(0)\|_{C^1(\Gamma;\mathbb{R}^3)}, \quad\forall  V\in C([0,\varepsilon);C^1(\overline{\Omega},\mathbb{R}^3)),\\
            |d^2F(\Gamma;V_1(0);V_2(0))|&\le C\big\|V_{1}(0)\|_{\textcolor{black}{C^{2}}(\Gamma,\mathbb{R}^3)}\|V_{2}(0)\|_{C^{2}(\Gamma,\mathbb{R}^3)},\,\quad \forall V_1,V_2\in C([0,\varepsilon);C^2(\overline{\Omega},\mathbb{R}^3)).
        \end{align}
    \end{lemma}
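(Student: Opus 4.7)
The plan is to exploit the fact that $x_0\in\Sigma$ is separated from $\Gamma$ by $\delta>0$ to convert both derivatives of $F$ at $\Gamma$ into integrals over $\Gamma$ against a smooth kernel. Let $G_N$ be the Green's function on $\Omega$ with homogeneous Dirichlet data on $\Gamma$ and homogeneous Neumann data on $\Sigma$. Any function $v$ harmonic in $\Omega$ with $\partial_n v=0$ on $\Sigma$ admits the representation
\begin{equation*}
v(x_0) = -\int_\Gamma \partial_{n_y}G_N(x_0,y)\,v(y)\,\mathrm{d}s(y),
\end{equation*}
and since $\text{dist}(x_0,\Gamma)\geq\delta$, the kernel $y\mapsto\partial_{n_y}G_N(x_0,y)$ belongs to $C^\infty(\Gamma)$ with a bound depending only on $\delta$ and $\Gamma$. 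Consequently $|v(x_0)|\le C\|v|_\Gamma\|_{C(\Gamma)}$ with $C$ independent of $v$. Specializing this to $v=u'$ and $v=u''$ and using the identifications $dF(\Gamma;V(0))=u'(x_0)$ and $d^2F(\Gamma;V_1(0);V_2(0))=u''(x_0)$ recalled from \cite{afraites2008second}, the task reduces to bounding the Dirichlet data $V_n\partial_nu$ of $u'$ and the Dirichlet data $\psi$ of $u''$ in $C(\Gamma)$.

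The first inequality is then immediate: since $\Gamma\in C^5$ is separated from $\Sigma$, interior/boundary Schauder estimates yield $u\in C^{4,\alpha}$ in a neighbourhood of $\Gamma$, so $\|\partial_nu\|_{C(\Gamma)}$ is a fixed constant depending only on $u$ and $\Gamma$. Thus
\begin{equation*}
|dF(\Gamma;V(0))|=|u'(x_0)|\le C\|V_n\partial_nu\|_{C(\Gamma)}\le C\|V(0)\|_{C(\Gamma;\mathbb{R}^3)}.
\end{equation*}

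For the second inequality I would bound each term appearing in $\psi$ separately in $C(\Gamma)$. The curvature-type pieces $V_{1,n}V_{2,n}H\partial_nu$ and $V_{1,\tau}\cdot(Dn\,V_{2,\tau})\partial_nu$ depend pointwise on $V_1$ and $V_2$ and are controlled by $C\|V_1(0)\|_{C(\Gamma)}\|V_2(0)\|_{C(\Gamma)}$, since $H$ and $Dn$ are $C^3$-smooth on $\Gamma$ by the $C^5$ regularity of $\Gamma$ and $\partial_nu$ is bounded. The tangential-derivative pieces $(V_{1,\tau}\tau\cdot\nabla V_{2,n}+V_{2,\tau}\tau\cdot\nabla V_{1,n})\partial_nu$ cost at most one tangential derivative on each velocity, giving a $C\|V_1(0)\|_{C^1(\Gamma)}\|V_2(0)\|_{C^1(\Gamma)}$ bound. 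The remaining pieces $V_{1,n}\partial_nu_2'+V_{2,n}\partial_nu_1'$ are the only delicate ones: they require a classical trace estimate $\|\partial_nu_j'\|_{C(\Gamma)}\le C\|V_j(0)\|_{C^3(\Gamma;\mathbb{R}^3)}$ on the first-order shape derivatives. Granted this, summing all contributions yields $|d^2F(\Gamma;V_1(0);V_2(0))|\le C\|V_1(0)\|_{C^3(\Gamma)}\|V_2(0)\|_{C^3(\Gamma)}$, which is the claim.

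The main obstacle is precisely this last trace estimate for $\partial_nu_j'$. The function $u_j'$ solves \eqref{first order} with Dirichlet data $V_{j,n}\partial_nu$ and homogeneous Neumann data on $\Sigma$, and the Neumann data $g$ on $\Sigma$ is only $H^{-1/2}$, so one has no direct access to classical regularity globally. My approach is to combine the global $H^1$ bound from Lemma \ref{lemma:energy} with an iterated boundary Schauder bootstrap carried out inside a $\delta/2$-tubular neighbourhood of $\Gamma$. The separation $\text{dist}(\Gamma,\Sigma)>\delta$ is what makes this work: it decouples the regularity of $u_j'$ near $\Gamma$ from the weak data on $\Sigma$ and lets local Dirichlet-face Schauder estimates upgrade $u_j'$ to $C^{3,\alpha}$ in that neighbourhood, with
\begin{equation*}
\|u_j'\|_{C^{3,\alpha}}\le C\|V_{j,n}\partial_nu\|_{C^{3,\alpha}(\Gamma)}\le C\|V_j(0)\|_{C^3(\Gamma;\mathbb{R}^3)},
\end{equation*}
using $\partial_nu\in C^{3,\alpha}(\Gamma)$. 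Taking the normal trace on $\Gamma$ then yields the required bound on $\partial_nu_j'$ and finishes the proof.
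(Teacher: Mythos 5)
Your proof is correct in outline and takes a genuinely different route from the paper's. The paper controls the point value $u'(x_0)$ (resp.\ $u''(x_0)$) through the chain $\|v\|_{C^0(\Sigma)}\le C\|v\|_{H^{3/2}(\Sigma)}\le C\|v\|_{H^2(\Omega')}\le C\|v\|_{H^1(\Omega)}\le C\|v\|_{H^{1/2}(\Gamma)}$ (interior elliptic regularity near $\Sigma$ plus the energy estimate of Lemma \ref{lemma:energy}), and then estimates the Dirichlet data $V_n\partial_n u$, resp.\ $\psi$, in $H^{1/2}(\Gamma)$ using Sobolev product rules; you instead represent $v(x_0)$ by the mixed Dirichlet--Neumann Green's kernel integrated over $\Gamma$, whose smoothness follows from the separation $\mathrm{dist}(\Gamma,\Sigma)>\delta$, and measure all boundary data in $C(\Gamma)$. (Equivalently, the Hopf lemma forces a harmonic function with $\partial_n v=0$ on $\Sigma$ to attain its extrema on $\Gamma$, giving $|v(x_0)|\le\|v\|_{C(\Gamma)}$ with no kernel at all; the only technicality in your version is that $x_0$ lies on $\Sigma$, so the representation formula should be justified by a limit from interior points.) Your route buys two things. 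First, for $dF$ it yields exactly the stated bound $C\|V(0)\|_{C(\Gamma;\mathbb{R}^3)}$, whereas the paper's own argument, which multiplies $\partial_n u$ by $V_n$ inside an $H^{1/2}(\Gamma)$ norm, only delivers $C\|V(0)\|_{C^1(\Gamma;\mathbb{R}^3)}$. Second, for $d^2F$ you correctly isolate the genuinely delicate contributions $V_{1,n}\partial_n u_2'+V_{2,n}\partial_n u_1'$ and supply a concrete mechanism --- a Schauder bootstrap localized in a tubular neighbourhood of $\Gamma$, seeded by the global $H^1$ bound of Lemma \ref{lemma:energy} --- to control $\|\partial_n u_j'\|_{C(\Gamma)}$; the paper needs the analogous bound on $\|\partial_n u_j'\|_{H^{1/2}(\Gamma)}$ for its terms $I_2,I_3$ but essentially asserts it without derivation. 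One economy worth noting: $C^{1,\alpha}$ regularity of $u_j'$ up to $\Gamma$ (hence only $\|V_j\|_{C^{2}}$ control of its Dirichlet data) already suffices for the sup-norm bound on $\partial_n u_j'$, so your full $C^{3,\alpha}$ bootstrap is stronger than necessary and sits comfortably within the claimed $C^3$ estimate.
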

    \begin{proof}
        First, the $C^3$ regularity of the boundary $\Gamma$ implies that, for a bounded neighborhood $\Omega^+$ of $\Gamma$ within $\Omega$,
        \begin{equation}\label{eqn:u-apriori}
            u\in H^3(\Omega^+),\quad u|_{\Gamma} \in H^{5/2}(\Gamma) \quad \mbox{and}\quad \partial_n u \in H^{3/2}(\Gamma).
        \end{equation}
        By \eqref{first order}, we have 
        \begin{align}
            |dF(\Gamma;V(0))|=|u'(x_0)|\le \max_{x \in \Sigma}|u'(x)| = \|u'\|_{C^0(\Sigma)}.
        \end{align}
        
        We consider a neighborhood of $\Sigma$ within $\Omega$, denoted as $\Omega'$, such that $\text{dist}(\Omega', \Gamma) > 0$. By the classical elliptic regularity theory (see \cite{evans2022partial}) and the trace theorem, there exists a constant $C>0$ such that
        \begin{equation}\label{local boundary regularity}
        \|u'\|_{C^0(\Sigma)} \leq C\|u'\|_{H^{3/2}(\Sigma)} \leq C\|u'\|_{H^2(\Omega')} \leq C\|u'\|_{H^1(\Omega)},
        \end{equation}
        \textcolor{black}{where we have used local boundary elliptic regularity. Since $u'$ is harmonic in $\Omega$ and satisfies the homogeneous Neumann boundary condition $\partial_n u' = 0$ on the smooth boundary $\Sigma$, the regularity is elevated locally near the boundary, guaranteeing that $u' \in H^2(\Omega')$ given $u' \in H^1(\Omega)$.}
        
        \textcolor{black}{Next, we estimate $\|u'\|_{H^1(\Omega)}$. By the variational theory of elliptic equations, Lemma \ref{lemma:energy}, and the fact that $u'$ satisfies equation \eqref{first order} with the boundary condition $u'|_{\Gamma} = -V_n \partial_n u$, we obtain}
        \begin{equation}
        \|u'\|_{H^1(\Omega)} \leq C\|u'\|_{H^{1/2}(\Gamma)} = C\|-V_n \partial_n u\|_{H^{1/2}(\Gamma)} = C\|V_n \partial_n u\|_{H^{1/2}(\Gamma)},
        \end{equation}
        where $C$ is a constant depending only on the domain $\Omega$.
        
        By the product rule in Sobolev spaces, when $V_n \in C^1(\Gamma)$ and $\partial_n u \in H^{1/2}(\Gamma)$, we have
        \begin{equation}
        \|V_n \partial_n u\|_{H^{1/2}(\Gamma)} \leq C\|V_n\|_{C^1(\Gamma)}\|\partial_n u\|_{H^{1/2}(\Gamma)}.
        \end{equation}
        
        Since under the regularity assumption \eqref{eqn:u-apriori}, $\partial_n u \in \textcolor{black}{H^{3/2}}(\Gamma) \hookrightarrow H^{1/2}(\Gamma)$ and the norm is bounded, i.e., there exists a constant $C>0$, independent of $V(0)$, such that $\|\partial_n u\|_{H^{1/2}(\Gamma)} \leq C$, we obtain
        \begin{equation}
        \|V_n \partial_n u\|_{H^{1/2}(\Gamma)} \leq C\|V_n\|_{C^1(\Gamma)}.
        \end{equation}
        
        Combining the above inequalities, \textcolor{black}{especially the local boundary regularity\eqref{local boundary regularity}, we have:} 
        \begin{equation}
        \max_{x\in \Sigma}|u'(x)| \leq C\|u'\|_{C^0(\Sigma)} \leq C\|u'\|_{H^1(\Omega)} \leq C\|u'|_{\Gamma}\|_{H^{1/2}(\Gamma)} \leq C\|V_n\|_{C^1(\Gamma)},
        \end{equation}
        thus
        \begin{equation}
        |dF(\Gamma;V(0))| \leq \max_{x\in \Sigma}|u'(x)| \leq C\|V_n\|_{C^1(\Gamma)} \leq C\|V(0)\|_{C^1(\Gamma;\mathbb{R}^3)}.
        \end{equation}
        This shows the first assertion. 

        {\color{black}
        Next, from \eqref{secondorder}, by applying the exact same local boundary regularity and embedding arguments (cf. \eqref{local boundary regularity}) as we did for $u'$, we establish the continuous chain of bounds for $u''$:
        \begin{equation}
            \begin{aligned}
                |d^2 F(\Gamma;V_1(0);V_2(0))| =& |u''(x_0)| \le \|u''\|_{C^0(\Sigma)} \\
                \le & C\|u''\|_{H^{1/2}(\Gamma)} = C\|\psi\|_{H^{1/2}(\Gamma)} \\
                \le & C \big( \|(V_{1,n}V_{2,n}H-V_{1,\tau}\cdot (Dn V_{2,\tau}))\partial_n u\|_{H^{1/2}(\Gamma)} \\
                & \quad + \|V_{1,n}\partial_n u_2^\prime\|_{H^{1/2}(\Gamma)} + \|V_{2,n}\partial_n u_1^\prime\|_{H^{1/2}(\Gamma)} \\
                & \quad + \|(V_{1,\tau} \cdot \nabla V_{2,n})\partial_n u\|_{H^{1/2}(\Gamma)} + \|(V_{2,\tau} \cdot \nabla V_{1,n})\partial_n u\|_{H^{1/2}(\Gamma)} \big) \\
                =& C(I_1+I_2+I_3+I_4+I_5).
            \end{aligned}
        \end{equation}
        }
        
        Then we can prove the second assertion term by term. For the first term $I_1$,
        \begin{align}
            I_1 =&\|(V_{1,n}V_{2,n}H-V_{1,\tau}\cdot (Dn V_{2,\tau}))\partial_n u\|_{H^{1/2}(\Gamma)}\\
            \le &C\|( |V_{1,n}V_{2,n}|+|V_{1,\tau}| |V_{2,\tau}| )\partial_n u\|_{H^{1/2}(\Gamma)},
        \end{align}
        where we used the fact that the shape operator $Dn$ is bounded on the smooth boundary $\Gamma$.
        
        Using the product rule for Sobolev spaces, for functions $f, g$ where $f \in H^{1/2}(\Gamma)$ and $g \in C^1(\Gamma)$, we have:
        \begin{equation}
        \|fg\|_{H^{1/2}(\Gamma)} \leq C\|f\|_{H^{1/2}(\Gamma)}\|g\|_{C^1(\Gamma)}
        \end{equation}
        
        Applying this to our case, and noting that \textcolor{black}{$\partial_n u \in H^{3/2}(\Gamma) \hookrightarrow H^{1/2}(\Gamma)$} with bounded norm due to \eqref{eqn:u-apriori}, we get:
        \begin{align}
            \|(|V_{1,n}V_{2,n}|+|V_{1,\tau}| |V_{2,\tau}|)\partial_n u\|_{H^{1/2}(\Gamma)} &\leq C\||V_{1,n}V_{2,n}|+|V_{1,\tau}| |V_{2,\tau}|\|_{C^1(\Gamma)}\|\partial_n u\|_{H^{1/2}(\Gamma)}\\
            &\leq C\||V_{1,n}V_{2,n}|+|V_{1,\tau}| |V_{2,\tau}|\|_{C^1(\Gamma)}
        \end{align}
        
        For products of functions in $C^1$, we have:
        \begin{align}
            \||V_{1,n}V_{2,n}|+|V_{1,\tau}| |V_{2,\tau}|\|_{C^1(\Gamma)} &\leq \|V_{1,n}V_{2,n}\|_{C^1(\Gamma)} + \||V_{1,\tau}| |V_{2,\tau}|\|_{C^1(\Gamma)}\\
            &\leq \|V_{1,n}\|_{C^1(\Gamma)}\|V_{2,n}\|_{C^1(\Gamma)} + \|V_{1,\tau}\|_{C^1(\Gamma)}\|V_{2,\tau}\|_{C^1(\Gamma)}
        \end{align}
        
        The components $V_{j,n}$ and $V_{j,\tau}$ are projections of $V_j$ onto the normal and tangential directions, respectively. Specifically, the tangential projection is given by $V_{j,\tau} = V_j - V_{j,n}n$. This projection relies entirely on the geometry of the boundary through the normal vector $n$. Since the boundary $\Gamma$ is $C^3$, the normal vector field is at least $C^2$. When we take derivatives of this projection, we only need to account for the derivatives of the vector field and the normal vector $n$.
        
        For a vector field $V_j \in C^1(\Gamma;\mathbb{R}^3)$, its normal component $V_{j,n} = V_j \cdot n$ satisfies:
        \begin{equation}
        \|V_{j,n}\|_{C^1(\Gamma)} \leq C\|V_j\|_{C^1(\Gamma;\mathbb{R}^3)}.
        \end{equation}
        \textcolor{black}{This holds because computing the surface gradient $\nabla_\Gamma (V_j \cdot n) = (\nabla_\Gamma V_j)n + (\nabla_\Gamma n)V_j$ involves only the first-order derivatives of the vector field $V_j$ and the first-order derivatives of the normal vector $n$ (which requires the boundary $\Gamma$ to be at least $C^2$).}
        
        Similarly, for the tangential component:
        \begin{equation}
        \|V_{j,\tau}\|_{C^1(\Gamma)} \leq C\|V_j\|_{C^1(\Gamma;\mathbb{R}^3)}.
        \end{equation}
        
        Therefore, utilizing the fact that $\| \cdot \|_{C^1} \le \| \cdot \|_{C^2}$, we can bound the product as:
        \begin{align}
            \||V_{1,n}V_{2,n}|+|V_{1,\tau}| |V_{2,\tau}|\|_{C^1(\Gamma)} &\leq C\|V_1\|_{C^1(\Gamma;\mathbb{R}^3)}\|V_2\|_{C^1(\Gamma;\mathbb{R}^3)} \nonumber \\
            &\leq C\|V_1\|_{C^2(\Gamma;\mathbb{R}^3)}\|V_2\|_{C^2(\Gamma;\mathbb{R}^3)}.
        \end{align}
        
        For the term $I_1$, we now have:
        \begin{equation}
        I_1 \leq C\|V_1\|_{C^2(\Gamma;\mathbb{R}^3)}\|V_2\|_{C^2(\Gamma;\mathbb{R}^3)}
        \end{equation}
        
        \textcolor{black}{The terms $I_2$ through $I_5$ involve at most first-order derivatives of the vector fields and the normal vector. Bounding their $H^{1/2}(\Gamma)$ norms via the product rule requires one additional derivative. Consequently, $C^2$ regularity for the vector fields and $C^3$ regularity for the boundary $\Gamma$ are sufficient to yield:}
        \begin{equation}
        I_j \leq C\|V_1\|_{C^2(\Gamma;\mathbb{R}^3)}\|V_2\|_{C^2(\Gamma;\mathbb{R}^3)}, \quad j = 2,3,4,5.
        \end{equation}
        
        Combining all these estimates, we conclude:
        \begin{equation}
        |d^2 F(\Gamma;V_1(0);V_2(0))| \leq C\|V_1\|_{C^2(\Gamma;\mathbb{R}^3)}\|V_2\|_{C^2(\Gamma;\mathbb{R}^3)}.
        \end{equation}
        
        This completes the proof of the second assertion.
    \end{proof}
    \begin{proof}[Proof of Theorem \ref{thm:hessian}]
        With the help of Lemma \ref{shapebound}, we can prove Theorem \ref{thm:hessian} similarly to \cite{chen2023solving}, here we omit the details.
    \end{proof}
    
    Now, we can give the convergence of the SGD algorithm. 
    \begin{theorem}\label{SGDalg}
        Let \(\{z_k\}_{k=0}^\infty \subset \mathbb{R}^d\) be a sequence of latent codes generated by Algorithm \ref{alg:main}, where each $z_k$ implicitly represents a shape through the neural network $f_\theta(z_k, \cdot)$. Assume that this sequence is uniformly bounded. Let the step size \(\alpha_k\) be fixed, such that \(\alpha_k = \bar{\alpha}\) for all \(k \in \mathbb{N}\). We also consider the shape derivatives approximated by stochastic gradients $g_k$ in the latent space, \textcolor{black}{with the following properties conditionally on $z_k$:}
        \begin{itemize}
            \item[(i)] There exist scalars \(\mu_G \geq \mu > 0\) such that for all \(k \in \mathbb{N}\):
            \begin{equation}
                \nabla \mathcal{L}(z_k)^T \mathbb{E}[\textcolor{black}{g_k|z_k}] \geq \mu \|\nabla \mathcal{L}(z_k)\|_2^2 \quad \text{and} \quad \|\mathbb{E}[g_k|z_k]\|_2 \leq \mu_G \|\nabla \mathcal{L}(z_k)\|_2.
            \end{equation}
            
            \item[(ii)] There exist scalars \(M \geq 0\) and \(M_V \geq 0\) such that for all \(k \in \mathbb{N}\):
            \begin{equation}
                \mathbb{V}[\textcolor{black}{g_k|z_k}] \leq M + M_V \|\nabla \mathcal{L}(z_k)\|_2^2,
            \end{equation}
            where $\mathbb{V}[g_k|z_k]$ denotes the conditional variance of the stochastic gradient $g_k$ given $z_k$.
        \end{itemize}

        Then, \textcolor{black}{if the step size satisfies \(0 < \bar{\alpha} \leq \frac{\mu}{L(\mu_G^2 + M_V)}\)}, the following estimate holds:
        \begin{equation}
            \mathbb{E} \left[ \frac{1}{K} \sum_{k=0}^{\textcolor{black}{K-1}} \|\nabla \mathcal{L}(z_k)\|_2^2 \right] \leq \frac{\bar{\alpha} L M}{\mu} + \frac{2(\mathcal{L}(z_0) - \mathcal{L}_{\inf})}{K \mu \bar{\alpha}} \xrightarrow{K \to \infty} \frac{\bar{\alpha} L M}{\mu},
        \end{equation}
        where $\mathcal{L}_{\inf}$ is a lower bound on $\mathcal{L}(z)$.
    \end{theorem}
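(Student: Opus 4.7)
The plan is to cast the result as the standard descent-lemma argument for nonconvex SGD (see \cite{bottou2018optimization}), for which the decisive prerequisite is $L$-Lipschitz continuity of $\nabla\mathcal{L}$ on a neighborhood of the trajectory $\{z_k\}$. The constant $L$ appearing in the step-size condition is precisely this Lipschitz constant, so producing it from the shape-derivative bounds of Theorem \ref{thm:hessian} is the main work; once it is in hand, the remainder is a routine telescoping estimate in the spirit of \cite{bottou2018optimization,JinZhouZou2020}.

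The first and main step is to transfer the shape-space bounds to latent-space smoothness of $\mathcal{L}$. A latent perturbation $\xi\in\mathbb{R}^d$ induces on $\Gamma_z$ the normal velocity
\[
V^{\xi}_n(x) \;=\; -\,\frac{\nabla_z f_\theta(z,x)^{\top}\xi}{\|\nabla_x f_\theta(z,x)\|}, \qquad x\in\Gamma_z,
\]
obtained by differentiating the level-set identity $f_\theta(z,x)=0$. Under the smoothness of $f_\theta$, the non-vanishing of $\nabla_x f_\theta$ on $\Gamma_z$, and the assumed uniform boundedness of $\{z_k\}$, the induced velocity is $C^4$ on $\Gamma_z$ with norms bounded linearly by $\|\xi\|_2$, and $\Gamma_z$ is itself $C^5$. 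The chain rule for shape differentiation (cf. \cite{Delfour2011,chen2023solving}) then gives
\[
\xi^{\top}\nabla_z\mathcal{L}(z) \;=\; dJ(\Gamma_z;V^{\xi}), \qquad \xi^{\top}\nabla_z^2\mathcal{L}(z)\eta \;=\; d^2 J(\Gamma_z;V^{\xi};V^{\eta}) \;+\; dJ(\Gamma_z;W^{\xi,\eta}),
\]
where $W^{\xi,\eta}$ encodes the second-order dependence of the velocity field on $z$. Plugging in Theorem \ref{thm:hessian} yields a uniform bound $\|\nabla_z^2\mathcal{L}(z)\|\le L$ on a neighborhood of $\{z_k\}$, hence $L$-smoothness of $\nabla\mathcal{L}$.

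With $L$-smoothness in place, the descent inequality
\[
\mathcal{L}(z_{k+1}) \;\le\; \mathcal{L}(z_k) \;-\; \bar{\alpha}\,\nabla\mathcal{L}(z_k)^{\top} g_k \;+\; \tfrac{L\bar{\alpha}^2}{2}\|g_k\|_2^2
\]
is immediate. Taking conditional expectation given the history through $z_k$, applying hypothesis (i) to the linear term, and writing $\mathbb{E}\|g_k\|_2^2 = \mathbb{V}[g_k] + \|\mathbb{E}[g_k]\|_2^2 \le M + (M_V+\mu_G^2)\|\nabla\mathcal{L}(z_k)\|_2^2$ via (i)--(ii), yields
\[
\mathbb{E}[\mathcal{L}(z_{k+1})] - \mathbb{E}[\mathcal{L}(z_k)] \;\le\; -\Bigl(\bar{\alpha}\mu - \tfrac{L\bar{\alpha}^2}{2}(M_V+\mu_G^2)\Bigr)\mathbb{E}\|\nabla\mathcal{L}(z_k)\|_2^2 \;+\; \tfrac{L\bar{\alpha}^2 M}{2}.
\]
The step-size restriction $\bar{\alpha}\le\min\{\mu/(L\mu_G^2),\mu/(LM_V)\}$ ensures the coefficient of the gradient term is at least $\bar{\alpha}\mu/2$.

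To close, summing from $k=1$ to $K$, using $\mathcal{L}(z_{K+1})\ge \mathcal{L}_{\inf}$, and dividing by $K\bar{\alpha}\mu/2$ delivers exactly the stated bound, whose dominant term $\bar{\alpha}LM/\mu$ is the noise floor as $K\to\infty$. I expect the genuine difficulty to lie in the first step, specifically in converting the $C^3(\Gamma_z;\mathbb{R}^3)$-norms of the induced velocities $V^{\xi}$ into a single constant independent of $z$ along the trajectory; this requires continuous dependence of $\Gamma_z$, of its normal and mean curvature, and of the elliptic solutions $u,u',u''$ appearing in Theorem \ref{thm:hessian}, on the latent code $z$, which is where the uniform boundedness hypothesis on $\{z_k\}$ combined with the smoothness of $f_\theta$ must be carefully exploited.
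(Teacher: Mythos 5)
Your proposal follows essentially the same two-step route as the paper's proof: first deriving a uniform Hessian bound $\|\nabla_z^2\mathcal{L}(z)\|\le L$ in the latent space from the second-order shape-derivative estimate of Theorem \ref{thm:hessian}, then running the standard descent-lemma/telescoping argument of \cite{bottou2018optimization} under hypotheses (i)--(ii) with the same step-size restriction. If anything, your first step is more explicit than the paper's (which simply asserts the chain-rule transfer), since you write out the induced normal velocity $V^{\xi}_n=-\nabla_z f_\theta^{\top}\xi/\|\nabla_x f_\theta\|$ and correctly flag that the uniformity of the resulting Lipschitz constant along the trajectory is the genuinely delicate point.
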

    
    \begin{remark}\label{rmk:gk_explanation}
        \color{black}{The assumptions on the stochastic gradient $g_k$ are directly motivated by our specific numerical implementation. In Algorithm \ref{alg:main}, $g_k$ is computed by sampling points on the interface $\Gamma_{z_k}$ to approximate the shape derivative integral. This sampling strategy, combined with the boundary integral equation (BIE) solutions for the state and adjoint variables, provides an unbiased estimate of the true gradient given the current latent code $z_k$, while inherently introducing a bounded variance.}
    \end{remark}

    \begin{proof}
        Our proof consists of two key parts: first establishing how the properties of the neural shape representation affect the Lipschitz continuity of the objective gradient, and then analyzing the convergence of the resulting stochastic optimization process in the latent space.
        
        \textbf{Step 1: Establishing Lipschitz Continuity}
        
        From Theorem \ref{thm:hessian}, we know that the second-order shape derivative is bounded
        \begin{equation}
            |d^2J(\Gamma;V_1(0);V_2(0))| \leq C\|V_1(0)\|_{C^2(\Gamma;\mathbb{R}^3)}\|V_2(0)\|_{C^2(\Gamma;\mathbb{R}^3)}.
        \end{equation}
        
        In our implicit neural representation, perturbations in the latent space $z$ induce shape perturbations through the zero level set of $f_\theta(z,x)$. Using the chain rule and the structure of our neural network, we can establish that
        \begin{equation}
            \|\nabla^2 \mathcal{L}(z)\|_2 \leq L,
        \end{equation}
        where $L$ depends on the shape derivative bounds from Theorem \ref{thm:hessian} and the properties of our neural network $f_\theta$.
        
        This yields the Lipschitz continuity of the gradient in the latent space:
        \begin{equation}
            \|\nabla \mathcal{L}(z + \Delta z) - \nabla \mathcal{L}(z)\|_2 \leq L\|\Delta z\|_2.
        \end{equation}
        
        \textbf{Step 2: Convergence Analysis}
        
        With the Lipschitz continuity established, we now analyze the convergence of our algorithm. Following the approach of \cite{bottou2018optimization} but adapted to our context, we proceed as follows.
        
        The Lipschitz continuity implies
        \begin{equation}
            \mathcal{L}(z + \Delta z) \leq \mathcal{L}(z) + \nabla\mathcal{L}(z)^T\Delta z + \frac{L}{2}\|\Delta z\|_2^2.
        \end{equation}
        
        For our update $z_{k+1} = z_k - \bar{\alpha}g_k$, where $g_k$ approximates the shape derivative in the latent space, taking conditional expectation, we have
        \begin{align}
            \mathbb{E}[\mathcal{L}(z_{k+1})|z_k] &\leq \mathcal{L}(z_k) - \bar{\alpha}\nabla\mathcal{L}(z_k)^T\mathbb{E}[g_k|z_k] + \frac{L\bar{\alpha}^2}{2}\mathbb{E}[\|g_k\|_2^2|z_k].
        \end{align}
        
        \textcolor{black}{Based on the assumptions stated in Theorem \ref{SGDalg} regarding the conditional expectation and variance of the stochastic gradient $g_k$, we obtain the following properties:}
        \begin{align}
            \nabla\mathcal{L}(z_k)^T\mathbb{E}[g_k|z_k] &\geq \mu\|\nabla\mathcal{L}(z_k)\|_2^2\\
            \mathbb{E}[\|g_k\|_2^2|z_k] &= \|\mathbb{E}[g_k|z_k]\|_2^2 + \mathbb{V}[g_k|z_k]\\
            &\leq \mu_G^2\|\nabla\mathcal{L}(z_k)\|_2^2 + M + M_V\|\nabla\mathcal{L}(z_k)\|_2^2\\
            &= (\mu_G^2 + M_V)\|\nabla\mathcal{L}(z_k)\|_2^2 + M
        \end{align}
        
        Substituting back:
        \begin{align}
            \mathbb{E}[\mathcal{L}(z_{k+1})|z_k] &\leq \mathcal{L}(z_k) - \bar{\alpha}\mu\|\nabla\mathcal{L}(z_k)\|_2^2 + \frac{L\bar{\alpha}^2}{2}((\mu_G^2 + M_V)\|\nabla\mathcal{L}(z_k)\|_2^2 + M)\\
            &= \mathcal{L}(z_k) - \bar{\alpha}\left(\mu - \frac{L\bar{\alpha}(\mu_G^2 + M_V)}{2}\right)\|\nabla\mathcal{L}(z_k)\|_2^2 + \frac{L\bar{\alpha}^2 M}{2}
        \end{align}
        
        \textcolor{black}{Using the step size condition $\bar{\alpha} \leq \frac{\mu}{L(\mu_G^2 + M_V)}$ assumed in Theorem \ref{SGDalg}}, we have 
        \begin{equation}
        \mu - \frac{L\bar{\alpha}(\mu_G^2 + M_V)}{2} \geq \frac{\mu}{2}.
        \end{equation}
        
        This ensures:
        \begin{equation}
            \mathbb{E}[\mathcal{L}(z_{k+1})|z_k] \leq \mathcal{L}(z_k) - \frac{\bar{\alpha}\mu}{2}\|\nabla\mathcal{L}(z_k)\|_2^2 + \frac{L\bar{\alpha}^2 M}{2}.
        \end{equation}
        
        Taking total expectation and rearranging:
        \begin{align}
            \frac{\bar{\alpha}\mu}{2}\mathbb{E}[\|\nabla\mathcal{L}(z_k)\|_2^2] &\leq \mathbb{E}[\mathcal{L}(z_k)] - \mathbb{E}[\mathcal{L}(z_{k+1})] + \frac{L\bar{\alpha}^2 M}{2}
        \end{align}
        
        Summing over $k = 0, 1, \ldots, K-1$:
        \begin{align}
            \frac{\bar{\alpha}\mu}{2}\sum_{k=0}^{K-1}\mathbb{E}[\|\nabla\mathcal{L}(z_k)\|_2^2] &\leq \mathbb{E}[\mathcal{L}(z_0)] - \mathbb{E}[\mathcal{L}(z_{K})] + \frac{KL\bar{\alpha}^2 M}{2}\\
            &\leq \mathcal{L}(z_0) - \mathcal{L}_{\inf} + \frac{KL\bar{\alpha}^2 M}{2},
        \end{align}
        \textcolor{black}{where we used the fact that the initial point $z_0$ is deterministic, hence $\mathbb{E}[\mathcal{L}(z_0)] = \mathcal{L}(z_0)$.}
        
        Dividing by $K$ and by $\frac{\bar{\alpha}\mu}{2}$:
        \begin{align}
            \mathbb{E}\left[\frac{1}{K}\sum_{k=0}^{K-1}\|\nabla\mathcal{L}(z_k)\|_2^2\right] &\leq \frac{2(\mathcal{L}(z_0) - \mathcal{L}_{\inf})}{K\bar{\alpha}\mu} + \frac{\bar{\alpha}LM}{\mu}
        \end{align}
        
        As $K \to \infty$:
        \begin{align}
            \lim_{K\to\infty}\mathbb{E}\left[\frac{1}{K}\sum_{k=0}^{K-1}\|\nabla\mathcal{L}(z_k)\|_2^2\right] &\leq \frac{\bar{\alpha}LM}{\mu}
        \end{align}
        This completes the proof, establishing convergence of our implicit neural shape optimization algorithm in the average gradient sense, with a bound that reflects both the shape optimization aspects and the neural network representation characteristics of our approach.
    \end{proof}

    \begin{remark}
    Theorem \ref{SGDalg} establishes convergence guarantees for the SGD method with fixed step size. These results can be extended to the Adam algorithm, which is used in our implementation (Algorithm \ref{alg:main}). Adam combines momentum and adaptive step sizes, requiring additional analysis of the momentum accumulation and second moment estimation. Following the approach in \cite{kingma2014adam} and \cite{reddi2019convergence}, one can show that, under similar assumptions but with appropriate modifications to account for the momentum terms and adaptive step sizes, Adam converges with comparable guarantees. Specifically, the key modification involves analyzing how the momentum terms affect the descent direction alignment and how the adaptive scaling affects the effective step size. The extended analysis would yield similar asymptotic behavior but with potentially improved constants due to the adaptive nature of the algorithm.
    \end{remark}
    
    \section{Numerical Results}\label{sec:results}
    We present comprehensive numerical experiments to validate the effectiveness of our implicit neural shape optimization approach to high-contrast EIT reconstruction in Algorithm \ref{alg:main}. The experiments are designed to evaluate both the accuracy of the reconstruction and the robustness of the algorithm in various challenging scenarios.
    
    \subsection{Implementation Details}
   The outer boundary $\Sigma$ is defined as the sphere of radius 1.5. For discretization, we employ a regular grid with a spacing of 0.06 for the marching cube algorithm. Our implementation uses the BEMPP-CL package\cite{betcke2021bempp} to solve boundary integral equations in the forward problem. The optimization process relies on the Adam algorithm with an initial learning rate of 0.01, operating in a latent space of dimension 256 for the representation of the shape.
    
    To simulate realistic measurement conditions, we introduce Gaussian noise into the boundary measurement data:
    \begin{equation}
        f^\delta = (1+\delta \xi_{ml})f_{\text{exact}}, \quad \xi_{ml} \sim \mathcal{N}(0,1),
    \end{equation}
    where $\delta$ controls the noise level.
    
    We employ three complementary metrics to quantitatively assess reconstruction quality:
    \begin{enumerate}
        \item \textbf{Indicator Error:} Measures the volumetric difference between the reconstructed and target shapes:
        \begin{equation}
            e(\hat{S}) = \sum_{x_i}\|\hat{S}(x_i)-S^\dag(x_i)\|_2,
        \end{equation}
        where $\hat{S}$ and $S^\dag$ are the characteristic functions of the reconstructed and target shapes, respectively, \textcolor{black}{and $\{x_i\}$ denotes the set of uniformly sampled grid points within the computational domain}.
        \item \textbf{Hausdorff Distance:} Captures the maximum geometric deviation between surfaces:
        \begin{equation}
            d_H(S_1, S_2) = \max\left\{\sup_{a \in S_1} \inf_{b \in S_2} \|a - b\|, \sup_{b \in S_2} \inf_{a \in S_1} \|b - a\|\right\}.
        \end{equation}
        \item \textbf{Volume Difference:} Provides a global measure of shape accuracy:
        \begin{equation}
            d_V(S_1, S_2) = |V_1 - V_2|,
        \end{equation}
        where $V_1$ and $V_2$ denote the volumes enclosed by surfaces $S_1$ and $S_2$ respectively.
    \end{enumerate}

    \subsection{Experimental Results}
    We evaluate our method on two distinct scenarios to demonstrate its versatility and effectiveness in different reconstruction contexts. These scenarios are carefully designed to assess both the accuracy and robustness of our algorithm under varying geometric conditions. The dataset used is Abdomen1k dataset\cite{9497733}. We randomly select training samples from the dataset and evaluate on held-out test cases.
    
    \subsubsection{Scenario I: Pancreas Reconstruction}
    The first scenario focuses on the pancreas model. Two examples are included in this scenario. Figures \ref{example1:result} - \ref{noise:error} are for the first example and Figures \ref{example12:result}-\ref{example12:intermidiate} are for the second example.
    
    In Figure \ref{example1:result}, the first row shows the initial guess for Algorithm \ref{alg:main}, the second row the final reconstructions, and the third row the reconstruction target. The reconstructions are shown at four different view angles: perspective view (first column), superior view (second column), anterior view (third column), and lateral view (fourth column). Each perspective reveals different anatomical features. This example demonstrates the ability of the algorithm to reconstruct complex pancreas structures. The convergence of the algorithm versus the iteration number is shown in Figure \ref{example1:error}, where we observe a rapid initial decrease in the loss function followed by gradual refinement, indicating efficient optimization in the latent space. The indicator error, Hausdorff distance, and volume distance metrics show consistent improvement throughout the optimization process, confirming the progressive geometric convergence towards the target shape.
    
    \begin{figure}[htbp]
        \centering
        \includegraphics[width=.8\textwidth]{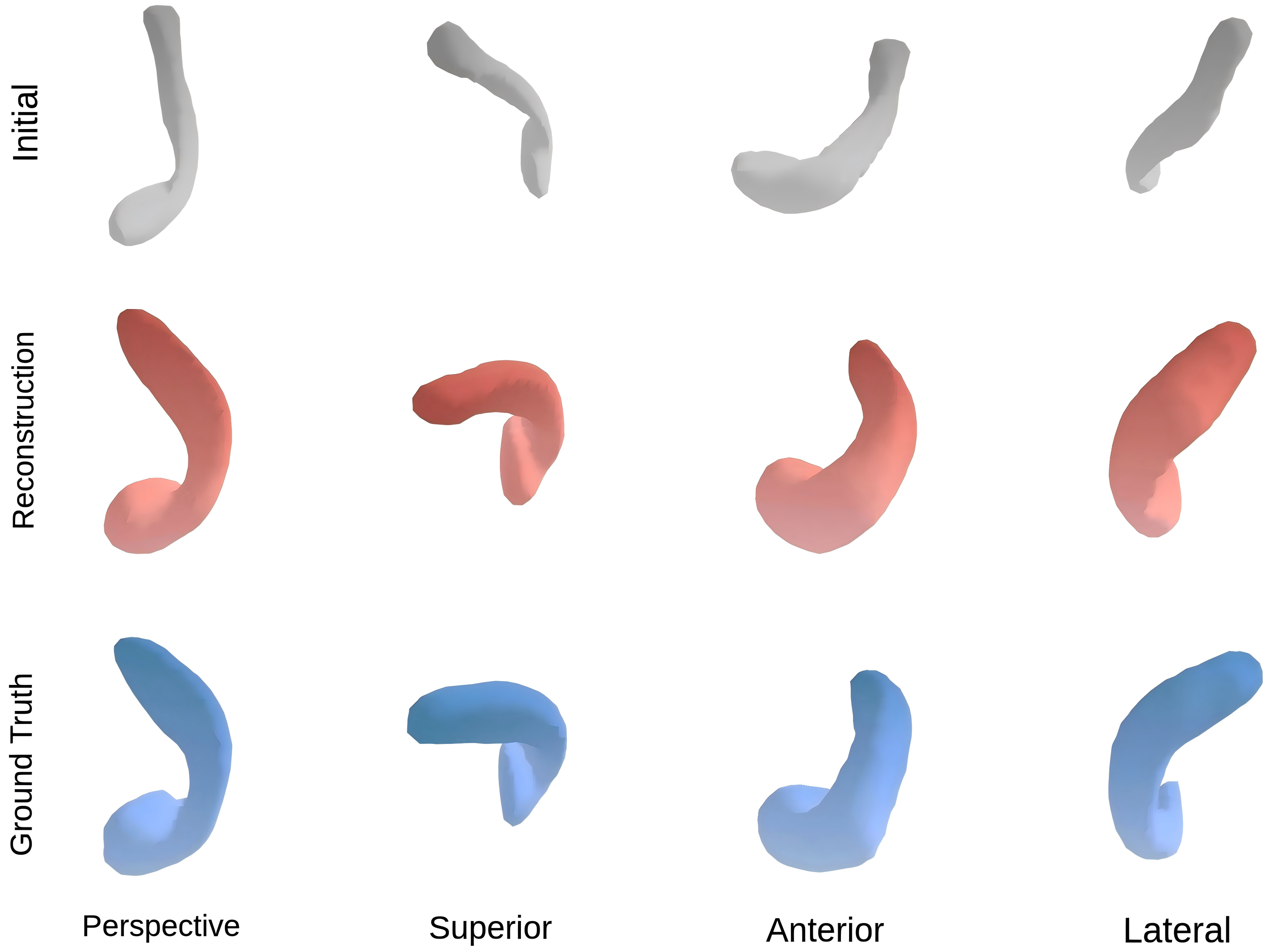}
        \caption{The reconstructions for Scenario I: the first, second, and third rows refer to the initial, the optimal recovery, and the ground truth target, respectively.}
        \label{example1:result}
    \end{figure}
    
    \begin{figure}[htbp]
        \centering
        \includegraphics[width=0.6\textwidth]{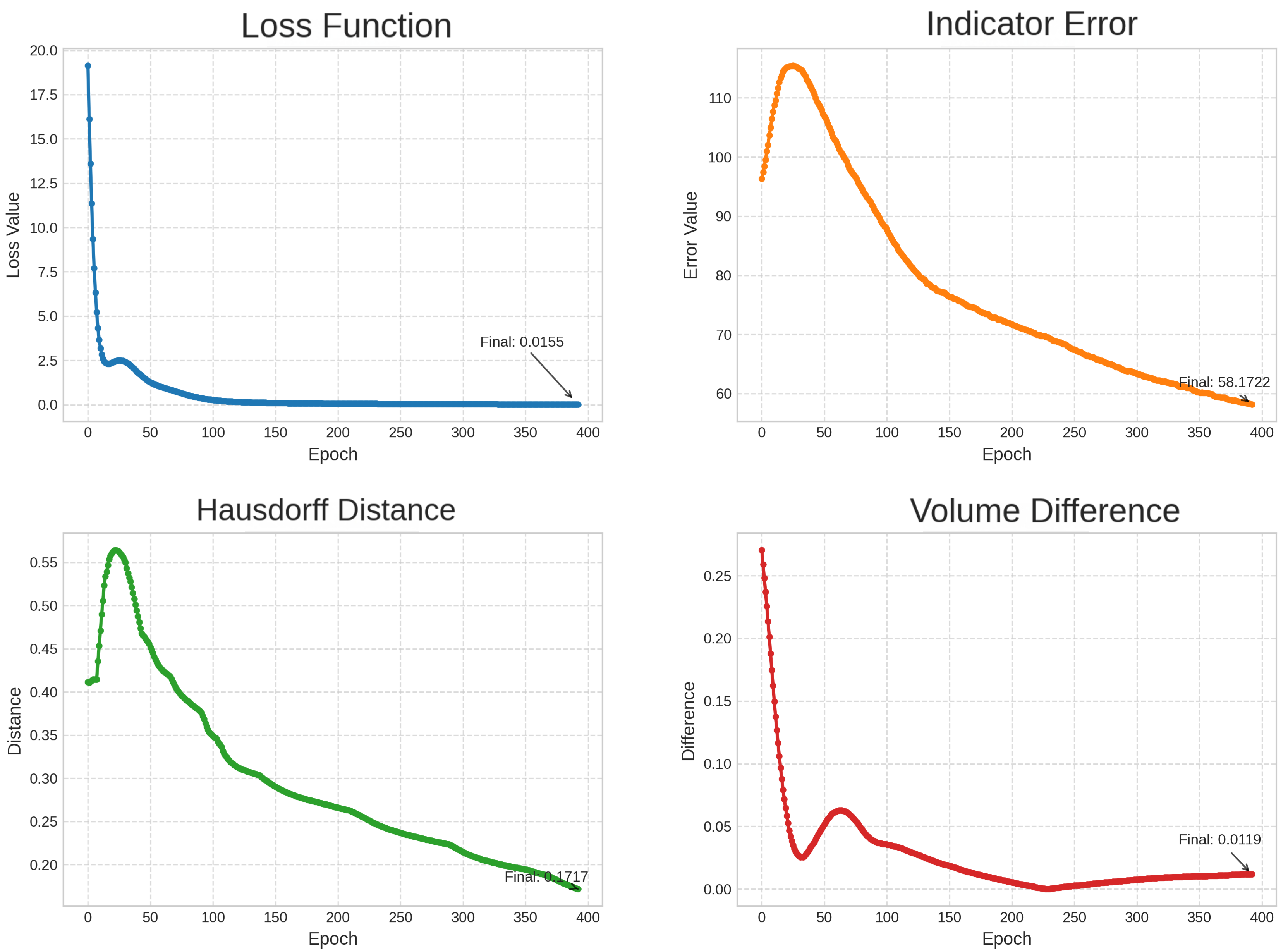}
        \caption{Convergence of the algorithm for Scenario I in terms of the loss $\mathcal{L}(z)$ (upper left), the indicator error $e$ (\textcolor{black}{upper right}), the Hausdorff error (lower left) and the volume error (lower right).}
        \label{example1:error}
    \end{figure}
    
    To evaluate the stability of our method under realistic measurement conditions, we examined the reconstruction performance with significant noise (20\%) added to the boundary measurement data. Figure \ref{noise:result} presents the reconstruction results with noisy measurements. Despite the considerable measurement noise, our method successfully recovers the core geometric features of the target shape, preserving both the general curved structure and the approximate proportions. The convergence metrics in Figure \ref{noise:error} further confirm the robustness of our algorithm.
    \begin{figure}[htbp]
        \centering
        \includegraphics[width=.8\textwidth]{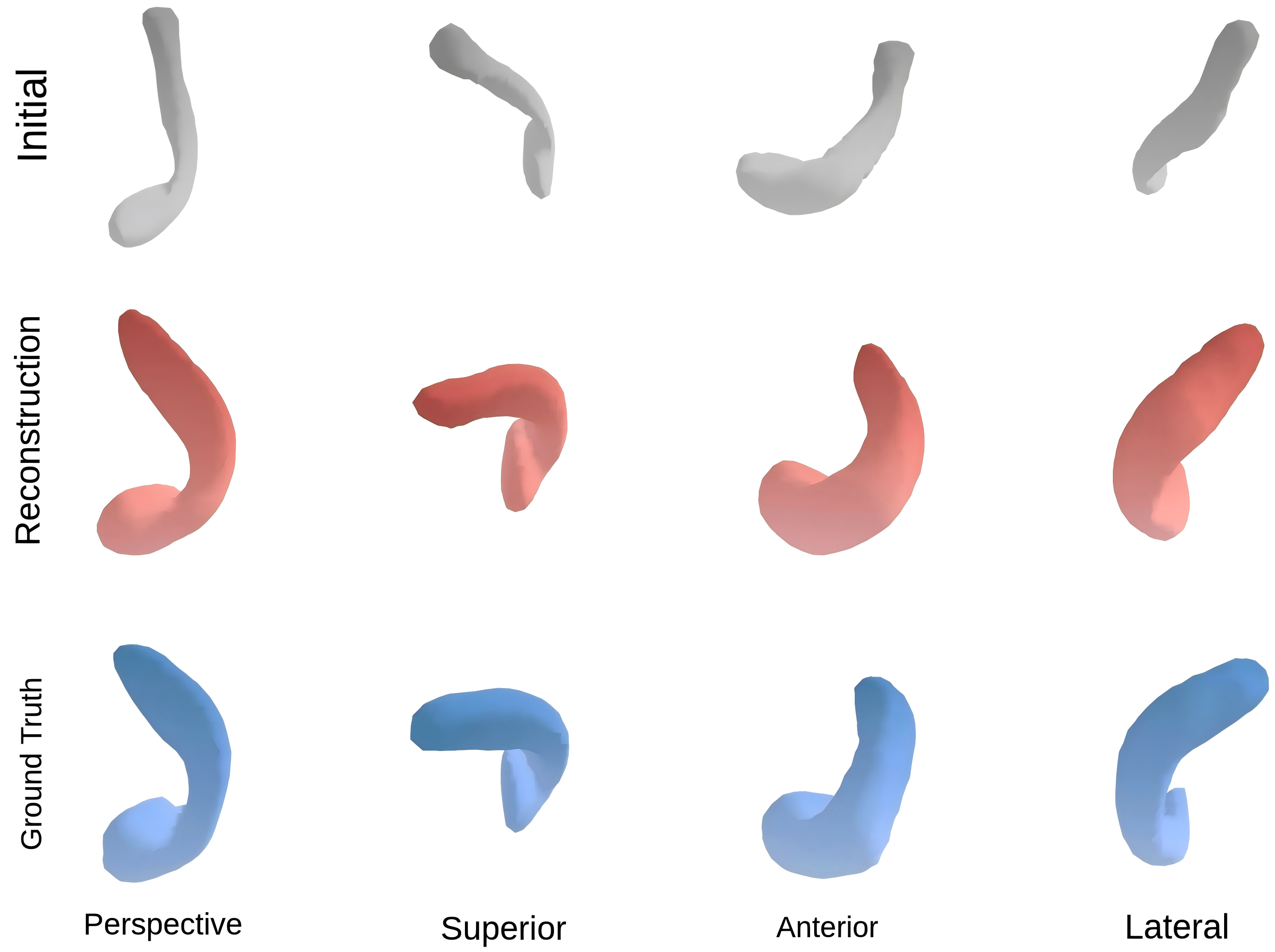}
        \caption{Reconstruction results with 20\% noise in measurement data: initial guess (first row), final reconstruction (middle row), and ground truth (third row). The different columns show different viewing angles of the same reconstruction.}
        \label{noise:result}
    \end{figure}
    \begin{figure}[htbp]
        \centering
        \includegraphics[width=0.6\textwidth]{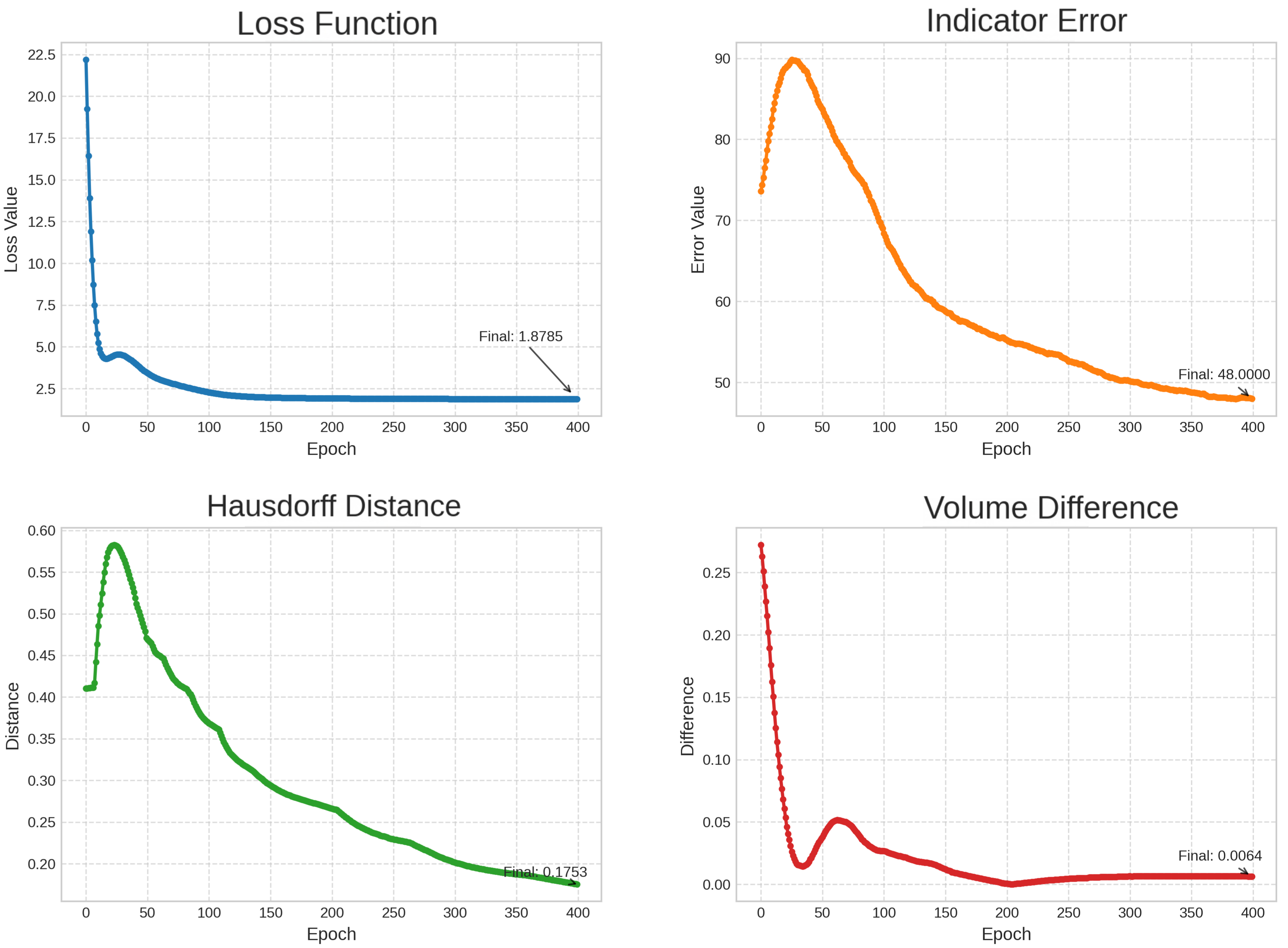}
        \caption{Convergence metrics for the noisy reconstruction (20\% noise level): loss function $\mathcal{L}(z)$ (upper left), indicator error (upper right), Hausdorff difference (lower left), and volume distance (lower right).}
        \label{noise:error}
    \end{figure}
    
    Figure~\ref{example12:result}-\ref{example12:intermidiate} present our second test case for Scenario I, which focuses on the reconstruction of a complex pancreas model with its distinctive anatomical morphology. The multi-view comparison clearly demonstrates our method's ability to accurately capture the intricate geometry of the pancreatic structure, including challenging features such as the pancreatic head, body, and tail regions with their varying thickness and curvature. The convergence behavior of this reconstruction is shown in Figure~\ref{example12:error}, where we can observe the progressive reduction in the four quantitative error metrics. The loss function exhibits rapid initial descent followed by gradual refinement, while the indicator error shows a steady decrease throughout the process. Particularly noteworthy is the Hausdorff distance curve, which reveals how our method systematically reduces geometric discrepancies, achieving a final value of 0.1834. The volume difference metric demonstrates the most dramatic improvement, with early fluctuations eventually stabilizing to reach an impressive final value of 0.0093—confirming the high volumetric accuracy of our reconstruction, which is critical for pancreatic applications.
    \begin{figure}[htbp]
        \centering
        \includegraphics[width=.8\textwidth]{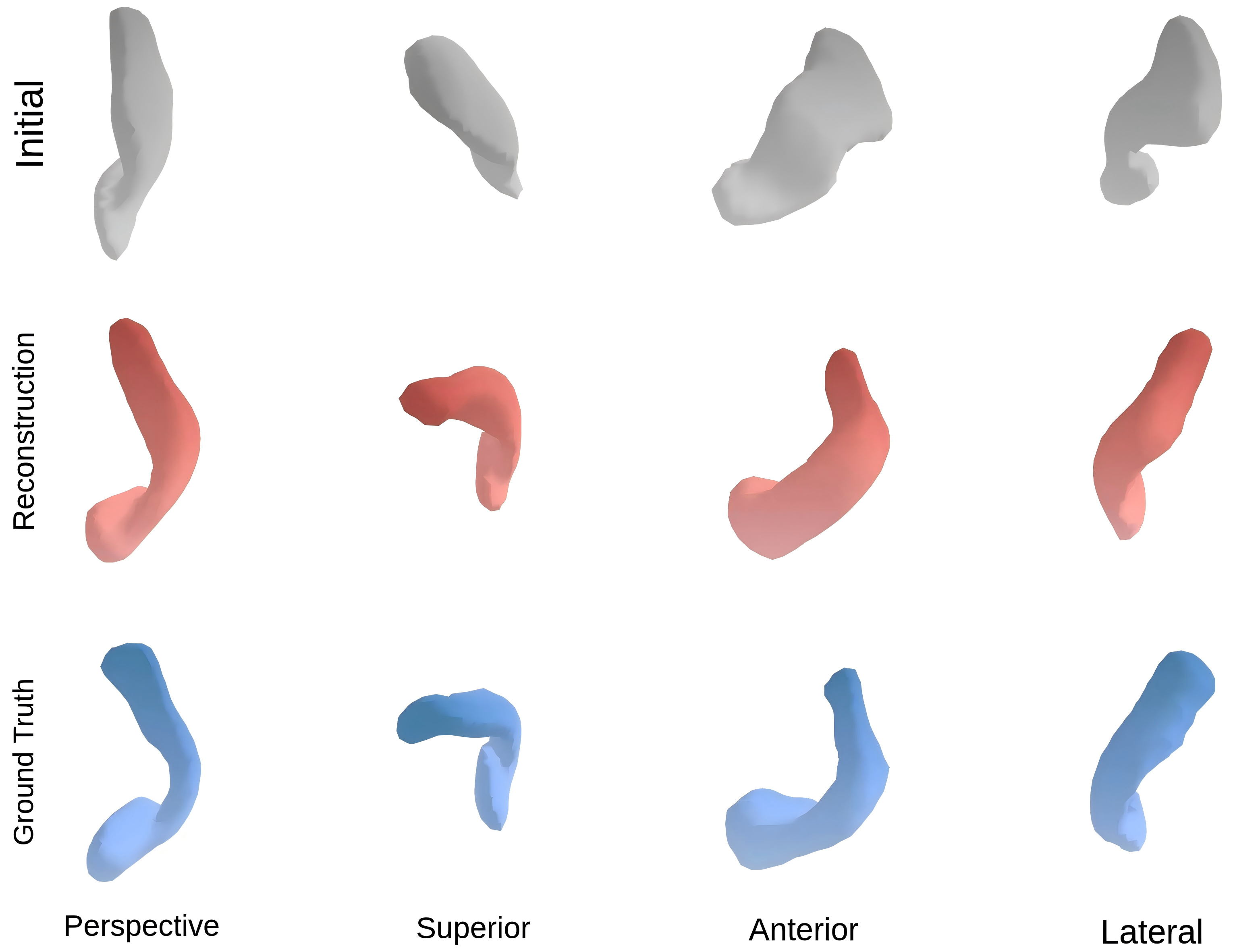}
        \caption{The reconstructions for the second example in Scenario I with exact data: the first, middle, and third rows refer to the initial, the optimal recovery, and the ground truth target, respectively.}
        \label{example12:result}
    \end{figure}
    \begin{figure}[htbp]
        \centering
        \includegraphics[width=0.6\textwidth]{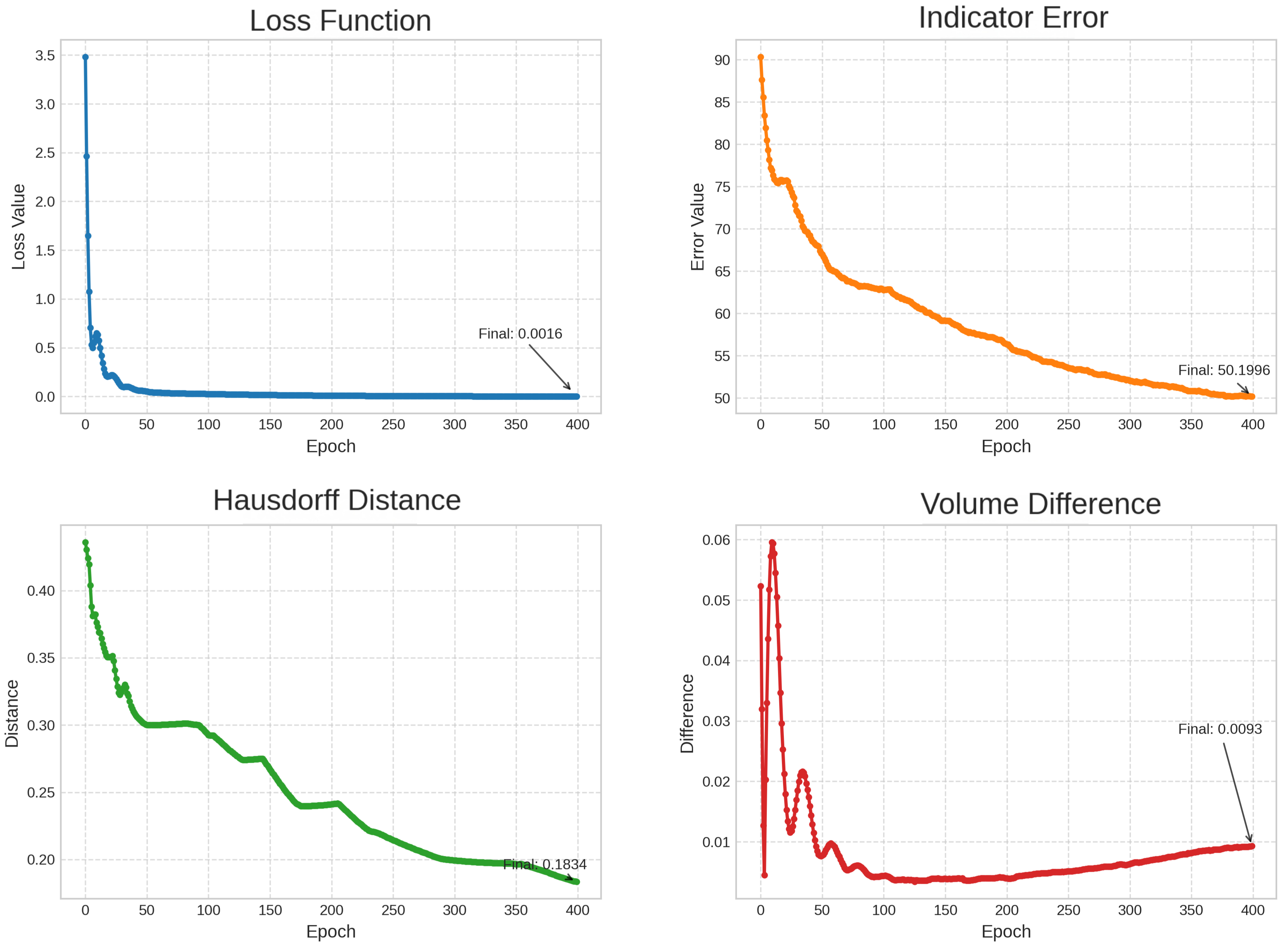}
        \caption{Convergence of the algorithm for the second example in Scenario I in terms of the loss $\mathcal{L}(z)$ (upper left), the indicator error $e$ (upper right), the Hausdorff error $e$ (lower left) and the volume error (lower right).}
        \label{example12:error}
    \end{figure}
    
    Figure~\ref{example12:intermidiate} shows the target and intermediate recovery results at different iterations for our test case, focusing on reconstructing a complex anatomical structure. The visualization presents the progressive shape evolution from a lateral viewpoint, capturing key developmental stages at iterations 0, 30, 60, 90, 120, 150, and 399 (ordered from left to right and from top to bottom), with the final panel showing the exact target shape. The transformation begins with a simple initialization (iteration 0) that roughly approximates the general contour, but lacks anatomical details. As the optimization progresses through iterations 30-60, we observe the emergence of the characteristic curved profile and the initial definition of both the upper and lower regions. By iterations 90-120, the algorithm successfully refines the proportions between different anatomical sections, with noticeable improvements in the representation of the elongated structure and curvature. At iteration 150, most of the key morphological features have been established. The final reconstruction at iteration 399 demonstrates remarkable fidelity to the ground truth model, accurately capturing the distinctive curved shape, varying thickness, and the characteristic bending features that are particularly challenging to reconstruct due to their complex topology. This visual progression clearly illustrates how our method systematically builds anatomical detail through the optimization process.
    \begin{figure}[htbp]
        \centering
        \includegraphics[width=.8\textwidth]{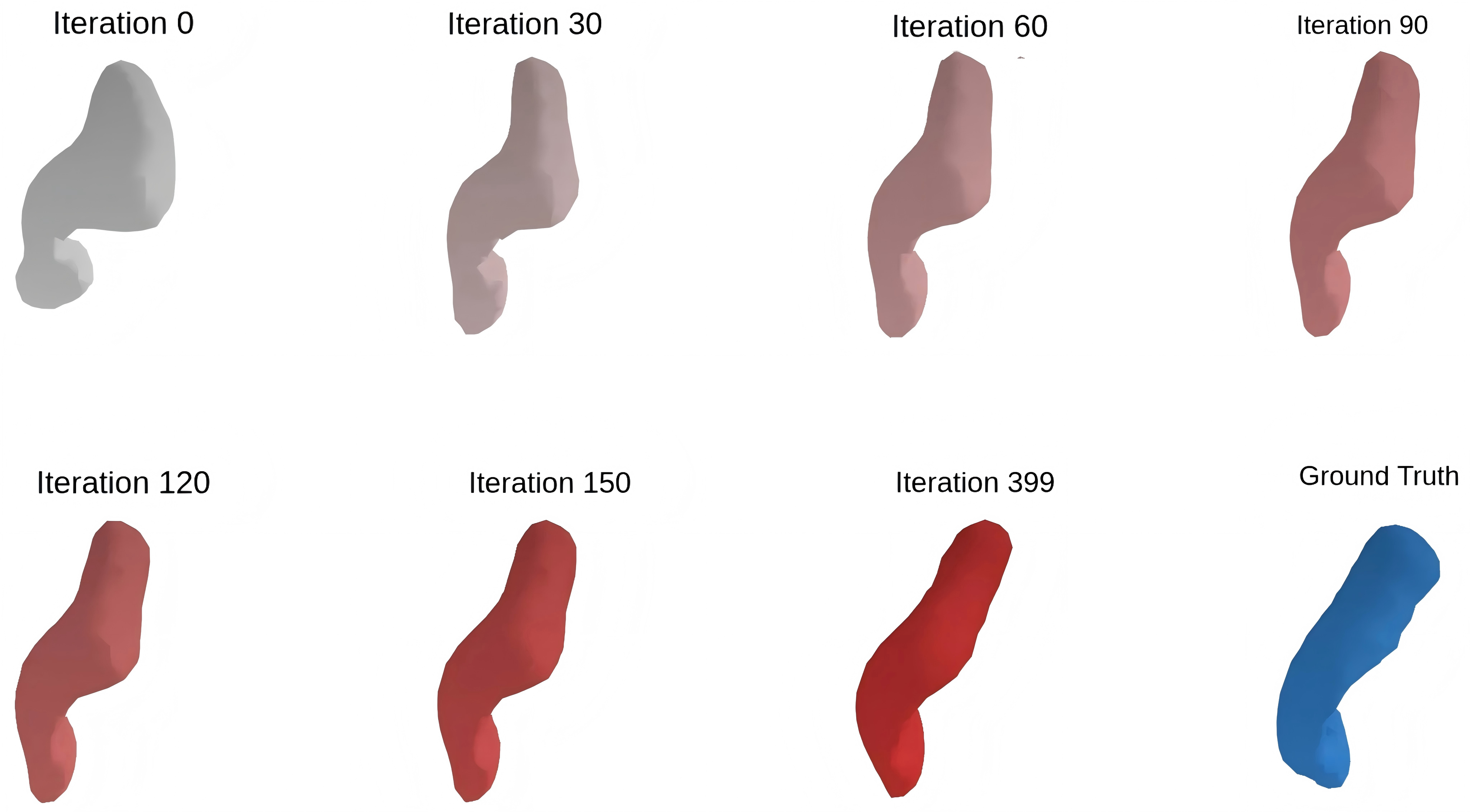}
        \caption{The convergence of the optimization algorithm for the second example in Scenario I: the intermediate reconstruction at iterations: 0, 30, 60, 90, 120, 150, and 399 (ordered from left to right and from top to bottom), and the last plot denotes the ground truth.}
        \label{example12:intermidiate}
    \end{figure}
    
    \subsubsection{Scenario II: Cardiac Reconstruction}
    In the second scenario, we evaluated our method on more challenging cardiac structures that exhibit complex morphological features and intricate surface topology. 
    
    Figure~\ref{fig:sec2_ex0} presents the reconstruction results from the first test case in this scenario, where we demonstrate the progressive evolution of the cardiac model across multiple iterations. The multi-view comparison clearly illustrates how our method transforms from an initial approximation to the final reconstruction, which closely resembles the ground truth model. Particularly impressive is the reconstruction's ability to capture the distinctive ventricular contours, atrial appendages, and the subtle surface variations that are characteristic of cardiac anatomy. When viewed from multiple perspectives—perspective, superior, anterior, and lateral—the reconstruction maintains high fidelity across all viewpoints, confirming the volumetric consistency of our approach. 
    
    Quantitative analysis in Figure~\ref{fig:sec2_loss0} reveals that major structural elements converge within the first 100 iterations, with the loss function decreasing by over 90\% during this initial phase. The Hausdorff distance metric exhibits an interesting temporary fluctuation around iteration 50, which corresponds to the algorithm's transition from global shape optimization to local detail refinement—a critical phase in accurately reconstructing the complex cardiac geometry. In the final iteration, our method achieves impressive error metrics with a Hausdorff distance of 0.2228 and volume difference of 0.0344, representing a 99.9\% improvement from the initial state. These quantitative results, coupled with the visual evidence of accurate reconstruction, validate the effectiveness of our approach for complex anatomical structures where precise geometry is essential for clinical applications such as surgical planning and functional assessment.
    
    \begin{figure}[htbp]
        \centering
        \includegraphics[width=.8\textwidth]{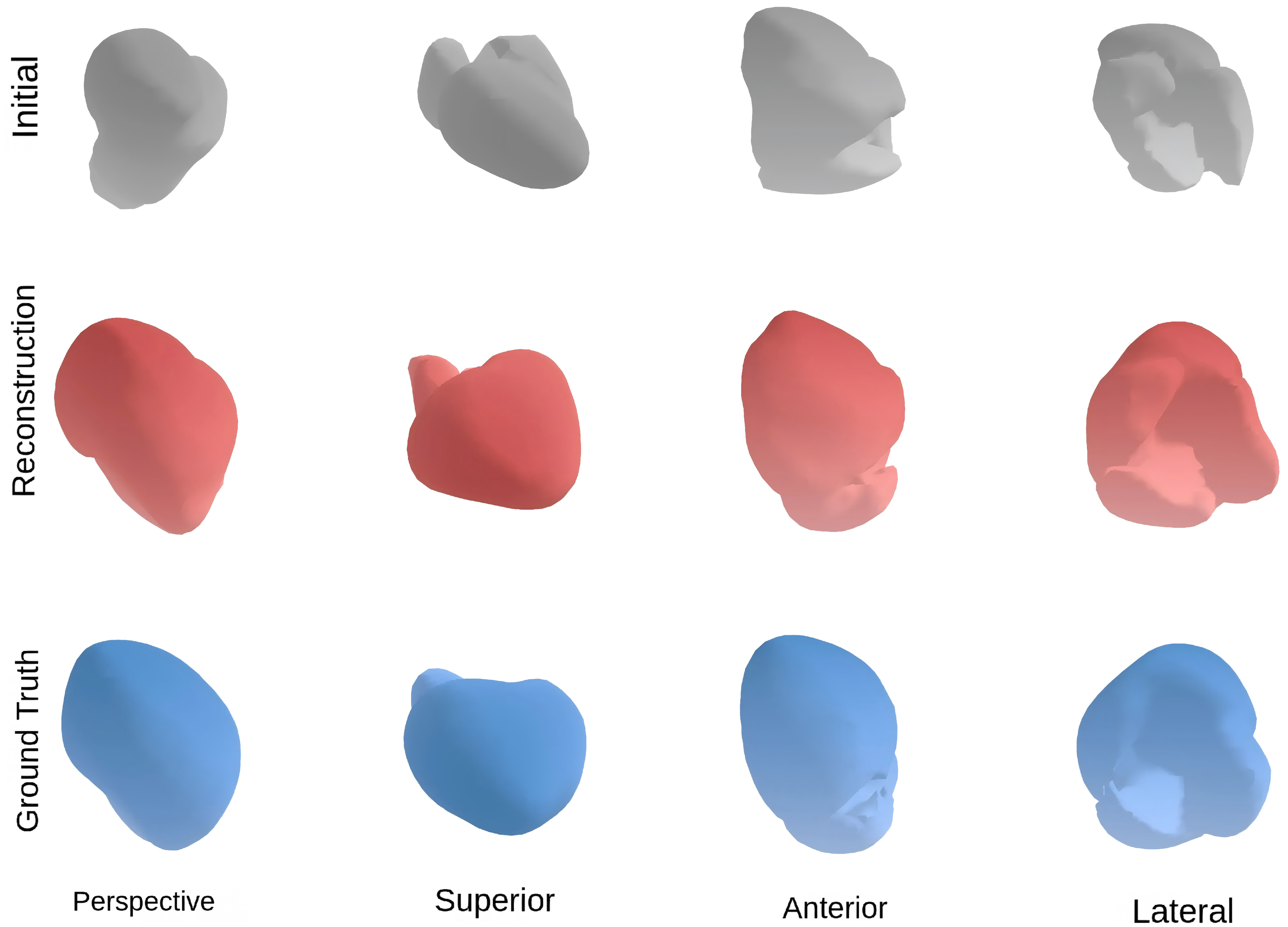}
        \caption{Progressive evolution of 3D shape reconstruction for the first test case in Scenario II.}
        \label{fig:sec2_ex0}
    \end{figure}
    \begin{figure}[htbp]
        \centering
        \includegraphics[width=0.6\textwidth]{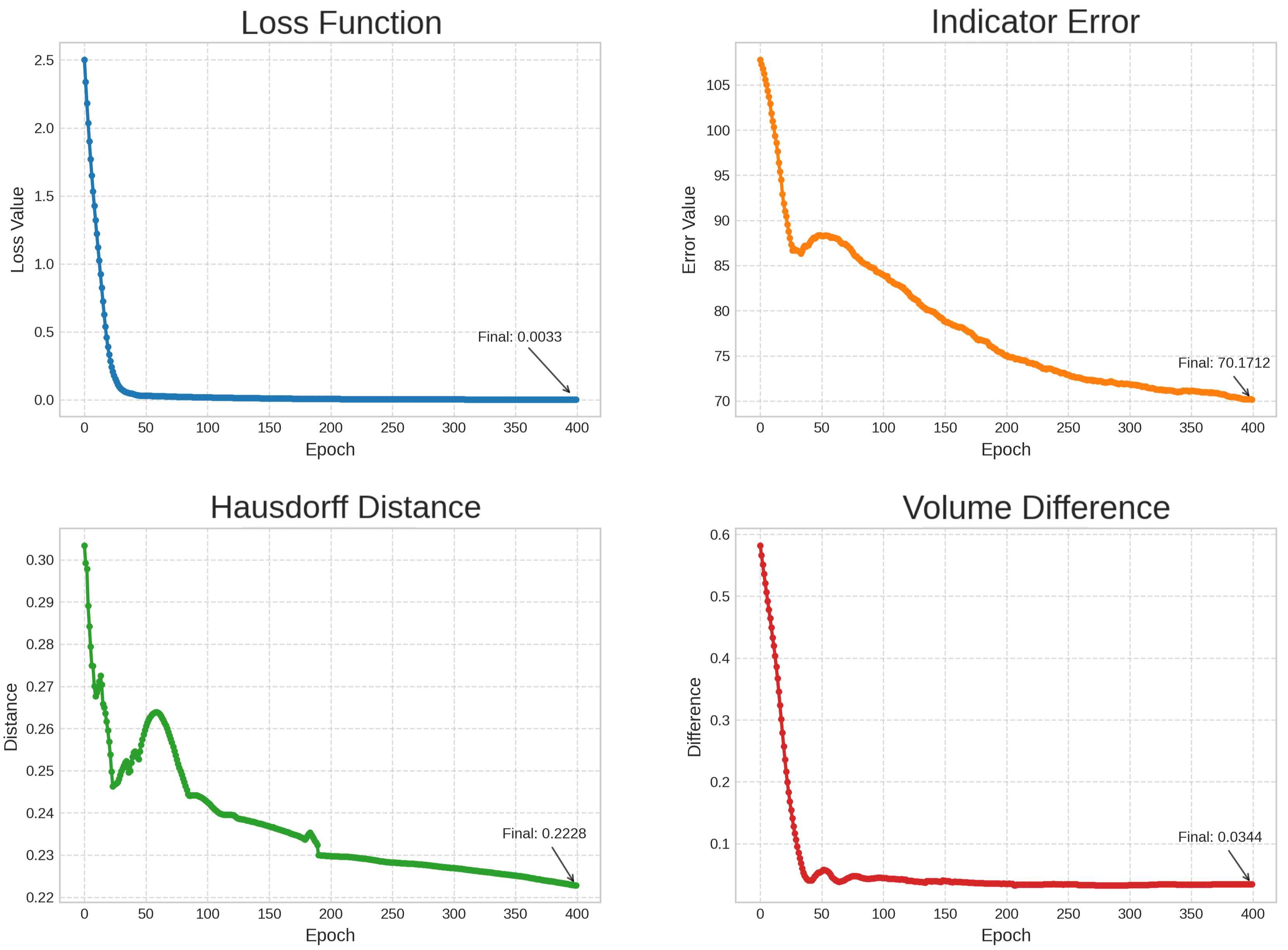}
        \caption{Error curves for the first test case in Scenario II, corresponding to the reconstruction sequence illustrated previously.}
        \label{fig:sec2_loss0}
    \end{figure}
   
    In our second test case of Scenario II, we evaluate the reconstruction performance on a more anatomically complex cardiac model, which presents significant challenges due to its intricate chamber geometry, valvular structures, and detailed myocardial surface features. This particular model incorporates both ventricular and atrial components with varying wall thickness and exhibits complex topological features at the vessel junction areas. Figure~\ref{fig:sec2_ex1} illustrates the multi-view comparison between our reconstruction result and the ground truth model from four complementary perspectives (perspective, superior, anterior, and lateral views). The visual comparison clearly demonstrates our method's ability to accurately recover the complex morphological features, with the reconstructed model (middle row) closely approximating the ground truth (bottom row) across all viewpoints. Notably, our approach successfully captures the distinctive contours of the major vessels and the subtle surface curvatures of the ventricular regions, representing a substantial improvement over the initial estimation (top row). The superior view particularly highlights the accurate reconstruction of the atrial structures, while the lateral view demonstrates the precise recovery of the anatomy of the ventricular outflow tract. Figure~\ref{fig:sec2_loss1} displays the error metrics throughout the optimization process, revealing insightful convergence patterns. The loss function shows exceptional efficiency, rapidly decreasing from 0.42 to below 0.1 within the first 30 iterations and ultimately converging to 0.0006. The indicator error exhibits a distinctive two-phase convergence pattern with an initial rapid descent followed by more gradual refinement, eventually stabilizing at 55.1362. Most interestingly, the Hausdorff distance curve reveals a characteristic temporary increase around iteration 40, corresponding to the transition between global and local optimization phases, before steadily decreasing to a final value of 0.1934. The volume difference metric demonstrates the most dramatic improvement, rapidly reducing to below 0.01 and ultimately achieving an impressive final value of 0.0041 - confirming the high volumetric accuracy of our reconstruction, which is particularly critical for cardiac applications where precise chamber volumes directly impact clinical measurements and functional assessments.
    \begin{figure}[htbp]
        \centering
        \includegraphics[width=.8\textwidth]{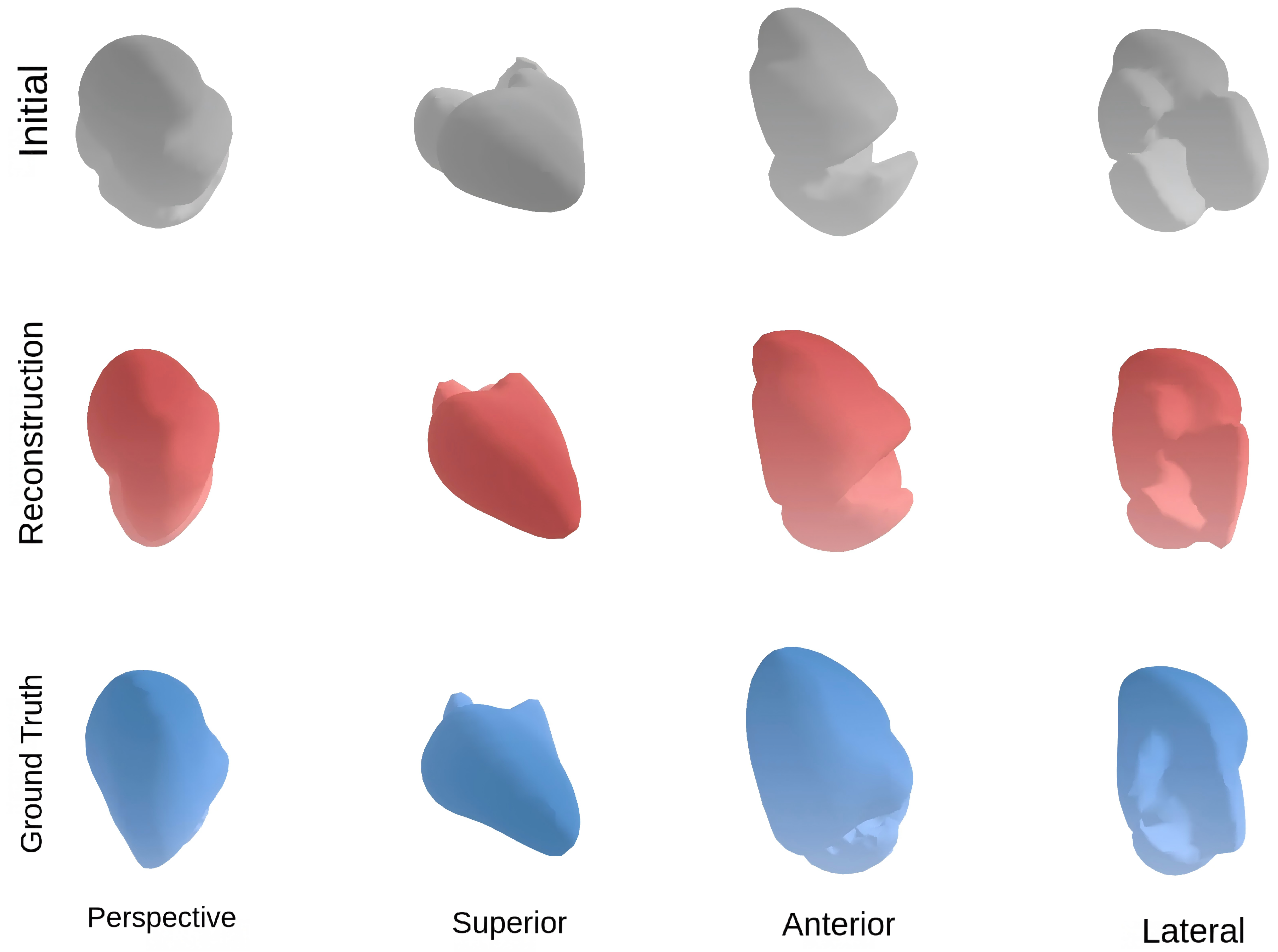}
        \caption{Cardiac model reconstruction in the second test case of Scenario II, shown from multiple viewpoints. The comparison includes the initial model (top row), final reconstruction result (middle row), and ground truth (bottom row) across four different views: perspective, superior, anterior, and lateral. The reconstruction successfully captures the complex morphological features of the cardiac anatomy, including the ventricular regions, atrial structures, and the distinctive contours of the major vessels.}
        \label{fig:sec2_ex1}
    \end{figure}
    \begin{figure}[htbp]
        \centering
        \includegraphics[width=0.6\textwidth]{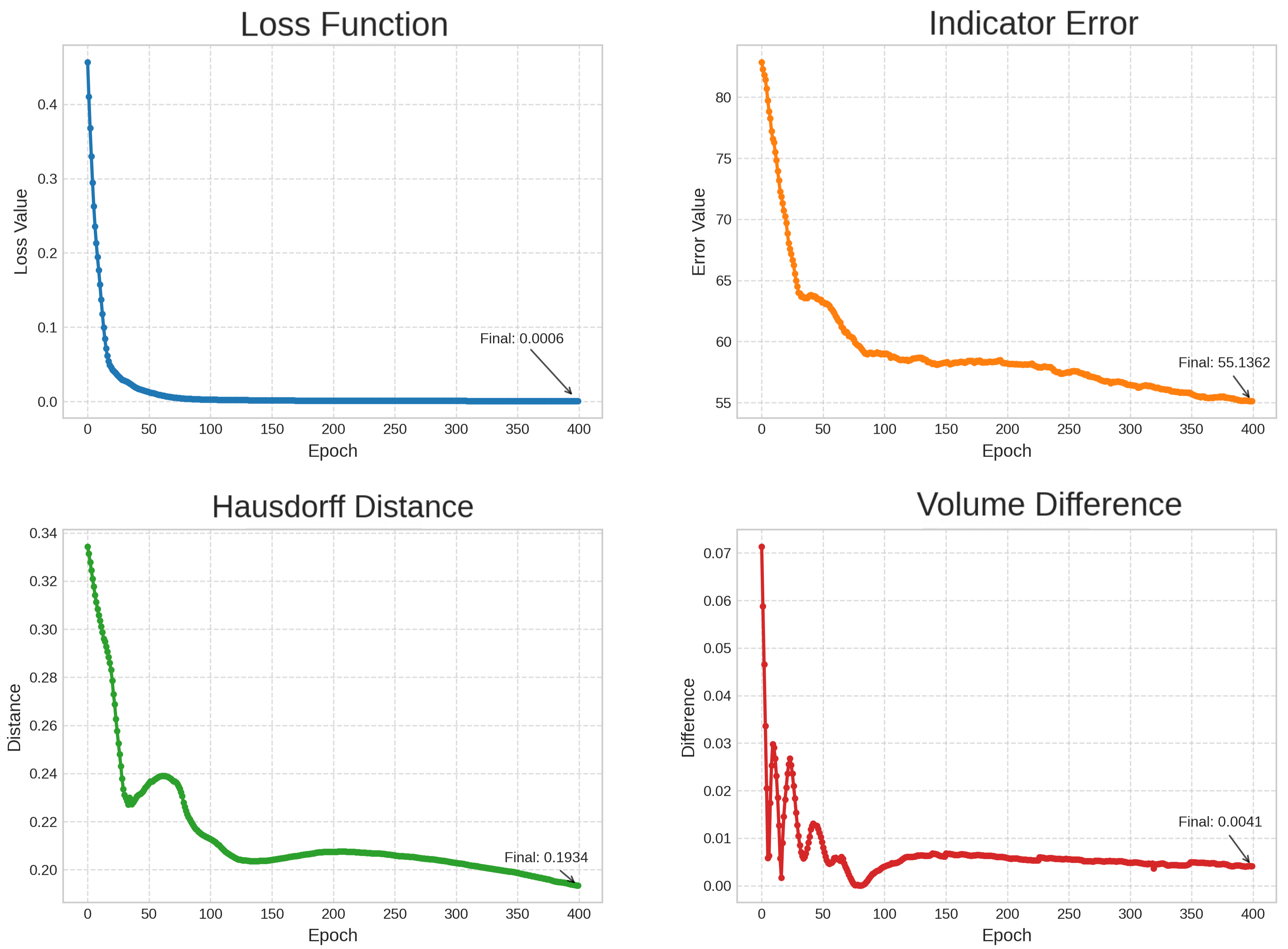}
        \caption{Convergence metrics for the cardiac model reconstruction in the second test case of Scenario II. The plots show the progression of four key error measurements during optimization: Loss Function (top left), Indicator Error (top right), Hausdorff Distance (bottom left), and Volume Difference (bottom right). Each metric demonstrates distinctive convergence patterns, with the loss function and volume difference showing particularly rapid improvement.}
        \label{fig:sec2_loss1}
    \end{figure}
    
    \section{Conclusion}\label{sec:conclusion}
    This work establishes a new paradigm for solving high-dimensional inverse problems by harmonizing the rigor of physics-based modeling with the expressive power of generative AI. We have introduced a ``solver-in-the-loop'' framework for 3D EIT that fundamentally transforms interface reconstruction from an ill-posed search in infinite-dimensional space to a principled optimization on a compact, learned manifold. By enforcing the governing PDE as a hard constraint through a boundary integral solver while leveraging a differentiable neural prior for geometric regularization, our approach overcomes the fragility of traditional shape optimization and the physical inconsistency often plaguing pure deep learning methods. The theoretical convergence guarantees and robust numerical performance on complex anatomical shapes underscore the reliability of this hybrid architecture.
    
    Looking forward, this framework paves the way for a broader class of scientific machine learning algorithms where data-driven priors and physical laws operate in concert rather than in competition. Immediate extensions include tackling anisotropic conductivities and dynamic interface evolution, while the ultimate vision is to generalize this methodology to other imaging modalities where data is scarce but physical principles are well-understood. Such advancements promise to unlock new capabilities in medical diagnostics, non-destructive testing, and beyond, marking a significant step toward trustworthy, physics-aware AI.
    
    \section*{Acknowledgment}
    The work of J. Chen was partially supported by the National Key R\&D Program of China under grants 2019YFA0709600 and 2019YFA0709602. G. Lin would like to acknowledge the support by the National Science Foundation (NSF) under grants DMS-2533878, DMS-2053746, DMS-2134209, ECCS-2328241, CBET-2347401 and OAC-2311848, and by the U.S.~Department of Energy (DOE) Office of Science Advanced Scientific Computing Research program under award number DE-SC0023161, and the DOE–Fusion Energy Science program, under grant number: DE-SC0024583.
    
\FloatBarrier

\bibliographystyle{abbrv}
\bibliography{ref}
\end{document}